\newcommand{\R}{\mathbb{R}}
\def\cleardoublepage{\clearpage\if@twoside \ifodd\c@page\else
\hbox{}
\vspace*{\fill}
\begin{center}
\end{center}
\vspace{\fill}
\thispagestyle{empty}
\newpage
\if@twocolumn\hbox{}\newpage\fi\fi\fi}
\def\keywords{\xdef\@thefnmark{}\@footnotetext}
\newtheorem{theorem}{Theorem}[section]
\newtheorem{lemma}[theorem]{Lemma}
\newtheorem{proposition}[theorem]{Proposition}
\newtheorem{corollary}[theorem]{Corollary}
\newtheorem{remark}[theorem]{Remark}
\newtheorem{conjecture}[theorem]{Conjecture}
\theoremstyle{definition}
\newtheorem{definition}[theorem]{Definition}
\newtheorem{thmy}{Theorem}
\newenvironment{oldtheorem}{\stepcounter{thm}\begin{thmy}}{\end{thmy}}
\title{\textbf{On a $j$-Santal\'{o} Conjecture}}
\author{Pavlos Kalantzopoulos, Christos Saroglou}
\date{}
\begin{document}
\maketitle
\keywords{2020 Mathematics Subject Classification. Primary: 52A20; Secondary: 52A38, 52A39.}\keywords{Key words and phrases. convex bodies, $j$-Santal\'{o} conjecture, polarity.}
\begin{abstract}
Let $k\geq 2$ be an integer. In the spirit of Kolesnikov-Werner \cite{KW}, for each $j\in\{2,\ldots,k\}$, we conjecture a sharp Santal\'{o} type inequality (we call it $j$-Santal\'{o} conjecture) for many sets (or more generally for many functions), which we are able to confirm in some cases, including the case $j=k$ and the unconditional case. Interestingly, the extremals of this family of inequalities are tuples of the $l_j^n$-ball. Our results also strengthen one of the main results in \cite{KW}, which corresponds to the case $j=2$. All members of the family of our conjectured inequalities can be interpreted as generalizations of the classical Blaschke-Santal\'{o} inequality.
Related, we discuss an analogue of a conjecture due to K. Ball \cite{Ball-conjecture} in the multi-entry setting and establish a connection to the $j$-Santal\'{o} conjecture.
\end{abstract}
\section{Introduction}
\hspace*{1.5em}Throughout this paper we fix an inner product $\langle \cdot ,\cdot\rangle$ in $\R^n$ and an orthonormal basis $\{e_m\}_{m=1}^n$ with respect to this inner product. For a vector $\widetilde{x}=(x_1,\ldots,x_{n-1})\in e_n^\perp$ and a real number $r$, the pair $(\widetilde{x},r)$ will always denote the vector $x_1e_1+\ldots+x_{n-1}e_{n-1}+re_n$. For $p>0$ and $x\in\R^n$, set $\|x\|_p:=\left(\sum_{m=1}^n|\langle x,e_m\rangle|^p\right)^{1/p}$ and $B_p^n:=\{x\in\R^n:\|x\|_p\leq 1\}$. In particular, if $p\geq 1$, $\|\cdot\|_p$ is just the $l_p$-norm in $\R^n$ and $B_p^n$ is its unit ball.  

Given a convex body $K$ (that is, convex compact set with non-empty interior; we refer to \cite{Sc} for a systematic and very complete study on the topic of convex bodies) in $\R^n$, its polar is defined by
$$K^\circ=\{x\in\R^n:\langle x,y\rangle\leq 1,\ \forall y\in K\}.$$
One of the cornerstones in convex geometry is the Blaschke-Santal\'{o} \cite{Bla, Santalo} inequality (see also \cite{M.P., MP2, LZ} for alternative proofs and extensions). We state it in the symmetric case: Let $K$ be a symmetric (i.e. $K=-K$) convex body in $\R^n$. Then,
\begin{equation}\label{classical-santalo}|K||K^\circ|\leq |B_2^n|^2,\end{equation} with equality if and only if $K$ is an ellipsoid. Here, $|\cdot|$ denotes the volume functional.
As a side note, we mention that the exact minimum of the volume product $|K||K^\circ|$ remains unknown for $n\geq 4$ and it is considered to be a major problem in convexity (see \cite{BM, Mi, Ku, Naz, IS}).  

Functional versions of \eqref{classical-santalo} were obtained in \cite{Ball-f-Santalo, AKM, Lehec, FM}.  A result relevant to this note is due to Fradelizi and Meyer \cite{FM}, which we state in the even case for simplicity. Let $\rho:\R\to\R_+$ be a measurable function. If $f,g:\R^n\to\R_+$ are even integrable functions satisfying $f(x)g(y)\leq \rho(\langle x,y\rangle)$, for all $x,y\in\R^n$, then the following inequality holds
$$\int_{\R^n}f(x)\, dx\int_{\R^n}g(y)\, dy\leq \left(\int_{\R^n}\rho(\|u\|_2^2)^{1/2}\, du\right)^2.$$
Very recently, Kolesnikov and Werner \cite{KW} conjectured that, for $\rho$ decreasing, the result of Fradelizi and Meyer holds for several functions, in the following sense.
\begin{conjecture}(Kolesnikov-Werner \cite{KW})\label{conj-K-W-1}
Let $k\geq 2$ be an integer, $\rho:\R\to\R_+$ be a decreasing function and $f_1,\ldots,f_k:\R^n\to\R_+$ be even integrable functions, such that
 \begin{equation}\label{KW-condition1}\prod_{i=1}^kf_i(x_i)\leq \rho\left(\sum_{1\leq i<l\leq k}\langle x_i,x_l\rangle\right),\qquad \forall x_1,\ldots,x_k\in\R^n.\end{equation}Then, it holds
\begin{equation*}
\prod_{i=1}^k\int_{\R^n}f_i(x_i)\,dx_i\leq\left(\int_{\mathbb{R}^n}\rho\left(\frac{k(k-1)}{2}\|u\|^2_2\right)^{1/k}\, du\right)^{k}.
\end{equation*}
\end{conjecture}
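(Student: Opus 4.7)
The plan is to identify the extremal configuration, reduce the general statement to characteristic functions of convex bodies, and then perform iterated Steiner symmetrization to reach the unconditional case (which is stated to be accessible). As a first step I would test sharpness on Euclidean balls: taking $f_i=\chi_{r_iB_2^n}$ and $\rho=\chi_{(-\infty,M]}$, the hypothesis \eqref{KW-condition1} is equivalent to $\sum_{i<l}r_ir_l\leq M$, since the supremum of $\sum_{i<l}\langle x_i,x_l\rangle$ over the product $\prod_i r_iB_2^n$ is attained by aligning all the $x_i$'s along a common line. AM--GM then yields
\[
M\geq\sum_{i<l}r_ir_l\geq\binom{k}{2}\Big(\prod_i r_i\Big)^{2/k},
\]
with equality iff the $r_i$ coincide, producing $\prod_i|r_iB_2^n|\leq|B_2^n|^k(2M/(k(k-1)))^{nk/2}$. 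This matches the conjectured right-hand side exactly and identifies tuples of equal $l_2^n$-balls as the extremizers.

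For the general case, a layer-cake decomposition of each $f_i$ and of $\rho$ reduces matters to characteristic functions of symmetric convex bodies $K_i$. I would then apply Steiner symmetrization coordinate by coordinate. Writing $x_i=(\widetilde{x}_i,t_i)\in e_n^\perp\oplus\mathbb{R}e_n$, the key splitting
\[
\sum_{i<l}\langle x_i,x_l\rangle=\sum_{i<l}\langle\widetilde{x}_i,\widetilde{x}_l\rangle+\sum_{i<l}t_it_l
\]
shows that, for fixed transverse coordinates, the hypothesis is a one-dimensional bound $\prod_i\phi_i(t_i)\leq\rho(c+\sum_{i<l}t_it_l)$, where $\phi_i(t)=f_i(\widetilde{x}_i,t)$ and $c=\sum_{i<l}\langle\widetilde{x}_i,\widetilde{x}_l\rangle$. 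The essential lemma is that this bound survives replacing each $\phi_i$ by its even decreasing rearrangement $\phi_i^*$; if so, iterating across all $n$ directions brings the $f_i$ to unconditional form, at which point one invokes the unconditional case.

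The principal obstacle is the one-dimensional rearrangement lemma for $k\geq 3$. Because the quadratic form $\sum_{i<l}t_it_l=\tfrac{1}{2}((\sum_it_i)^2-\sum_it_i^2)$ is not invariant under individual sign flips $t_i\mapsto-t_i$, the sign pattern at which the constraint is tight can shift when the $\phi_i$ are symmetrized, and naively comparing the $2^k$ sign choices is insufficient. Fradelizi--Meyer settled the analogous lemma in the case $k=2$ by a delicate one-dimensional argument, but for larger $k$ the interaction of the $\binom{k}{2}$ cross-terms under sign changes appears genuinely more intricate, and a new idea is likely required---perhaps bypassing coordinate-wise symmetrization altogether and instead exploiting the multi-entry Ball-type structure flagged in the abstract.
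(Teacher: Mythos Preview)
You have correctly located the obstruction, but you should be aware that the statement you are attacking is recorded in the paper as a \emph{conjecture}, not a theorem: the paper does not prove it in full generality, and neither does anyone else at present. What the paper establishes (Theorem~\ref{KWconj}, case~(iii) with $j=2$) is the partial result in which $f_3,\ldots,f_k$ are assumed unconditional from the outset; this strengthens the Kolesnikov--Werner unconditional theorem by two functions, but the general even case remains open. Your honest assessment that ``a new idea is likely required'' is therefore the right conclusion.

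It is worth comparing your symmetrization scheme with the one the paper actually carries out in Lemma~\ref{cknsldkcs}, because the difference explains exactly why the partial result goes through while the full one does not. You propose to Steiner-symmetrize \emph{all} the $K_i$ simultaneously, which forces you to control the constraint under independent sign flips $t_i\mapsto -t_i$ for every $i$; as you note, $\sum_{i<l}t_it_l$ has no useful invariance under such flips. The paper instead symmetrizes \emph{only} $K_1$ and replaces $K_2$ by the generalized polar $(K_1,K_3,\ldots,K_k)^\circ_j$, in the spirit of Meyer--Pajor. The single sign flip needed, $r_1'\mapsto -r_1'$, is handled via the identity $s_2(r_1,-r_2,\ldots,-r_k)=s_2(-r_1,r_2,\ldots,r_k)$ (equation~\eqref{sdcsdcsdc}), but invoking it requires the freedom to replace $(\tilde x_i,r_i)$ by $(\tilde x_i,-r_i)$ for $i\geq 3$---hence the unconditionality hypothesis on $K_3,\ldots,K_k$. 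After $K_1$ is made unconditional, $K_2=(U_1,K_3,\ldots,K_k)^\circ_j$ is automatically unconditional and no further symmetrization is needed. So the paper's trick is not a stronger rearrangement lemma but a weaker symmetrization goal, purchased at the cost of the extra hypothesis.

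Your reduction from functions to bodies via layer-cake is essentially the content of Proposition~\ref{vjndsf}; the paper formalizes it with one application of multiplicative Pr\'ekopa--Leindler on the level-set volumes, so that part of your plan is sound and matches the paper's route.
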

They proved the following.
\begin{oldtheorem}\label{KW-Theorem}(Kolesnikov-Werner \cite{KW}) Conjecture \ref{conj-K-W-1} holds if $f_1,\ldots,f_k$ are unconditional (with respect to the same orthonormal basis $\{e_m\}$).
\end{oldtheorem}
One of the goals of this note is to relax the assumption of all $f_1,\ldots,f_k$ being unconditional in Theorem \ref{KW-Theorem} above, to $k-2$ of them being unconditional. 
While polarity in the case $k=2$ is well understood (see \cite{BS, AM}; see also \cite{zoo} for generalizations), it is not clear if there is such a notion for $k>2$. Therefore, we believe it is meaningful to seek for Santal\'{o} type inequalities for sets and for functions under different conditions than \eqref{KW-condition1}.
More precisely, we give the following definition.
\begin{definition}
Let  $\Phi:(\R^n)^k\to\R$ be a function. We say that the sets $K_1,\ldots,K_k\subseteq \R^n$ satisfy \textit{$\Phi$-polarity condition}, if
\begin{equation*}\label{K-Phi-polar}
\Phi(x_1,\ldots,x_k)\leq 1,\qquad \forall x_i\in K_i,\ i=1,\ldots,k.
\end{equation*}
Similarly, we say that the functions $f_1,\ldots,f_k:\R^n\to\R_+$ satisfy $\Phi$-polarity condition with respect to some decreasing function $\rho:\R\to [0,\infty]$, if
\begin{equation*}\label{f-Phi-polar}
\prod_{i=1}^kf_i(x_i)\leq \rho(\Phi(x_1,\ldots,x_k)),\qquad \forall x_i\in\R^n,\ i=1,\ldots,k.
\end{equation*}
\end{definition}
In this note, we will consider a specific family of functions $\Phi$. For integers $1\leq j\leq k$, we set
\begin{equation*}
\mathcal{S}_j(x_1,\ldots,x_k):=\sum_{l=1}^ns_j(x_1(l),\ldots,x_k(l)),\qquad x_1,\ldots,x_k\in\R^n,
\end{equation*}
where $x(l)$ is the $l$'th coordinate (with respect to our fixed basis $\{e_m\}$) of a vector $x\in\R^n$, $l=1,\ldots,n$,  and $s_j$ is the elementary symmetric polynomial of $k$ variables and degree $j$, i.e.
\begin{equation*}
s_j(r_1,\ldots,r_k):=\sum_{1\leq i_1<\ldots< i_j\leq k}r_{i_1}\cdots r_{i_j},\qquad r_1,\ldots,r_k\in \R.
\end{equation*}
Set, also, $$\mathcal{E}_j:=\frac{{\cal S}_j}{{{k}\choose{j}}}.$$Note that for $j\neq 2$ the map $\mathcal{E}_j$ (or ${\cal S}_j$) depends on the basis $\{e_m\}$. However, for $j=2$ this is not the case; one can check that
\begin{equation*}
\mathcal{S}_2(x_1,\ldots,x_k)=\sum_{1\leq i<l\leq k}\langle x_i,x_l\rangle.
\end{equation*}

We conjecture that a Santal\'{o} type inequality holds for symmetric sets, under the assumption of ${\cal E}_j$-polarity condition, $j\ge  2$.
\begin{conjecture}($j$-Santal\'{o} conjecture)\label{S_j Santalo for bodies}
Let $2\leq j\leq k$ be two integers, where $k\geq 2$. If $K_1,\ldots,K_k$ are symmetric convex bodies  in $\R^n$, satisfying $\mathcal{E}_j$-polarity condition, then it holds
\begin{equation}\label{santalo-for-many}
\prod_{i=1}^k|K_i|\leq |B^n_j|^k.
\end{equation}
\end{conjecture}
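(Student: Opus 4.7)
The plan is to prove the conjecture in the two regimes highlighted in the abstract, the unconditional case (for all $2\le j\le k$) and the top-degree case $j=k$; the full conjecture appears out of reach. The central algebraic feature I will exploit is the coordinate-wise additive decomposition
\begin{equation*}
{\cal S}_j(x_1,\ldots,x_k)=\sum_{l=1}^{n}s_j(x_1(l),\ldots,x_k(l)),
\end{equation*}
which makes the ${\cal E}_j$-polarity condition compatible with slicing along any coordinate hyperplane and so opens the door to induction on the dimension $n$.

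In the base case $n=1$, the bodies are symmetric intervals $K_i=[-a_i,a_i]$, and unconditionality forces the worst case of the polarity condition to occur at positive endpoints, reducing ${\cal E}_j$-polarity to $s_j(a_1,\ldots,a_k)\le \binom{k}{j}$. Maclaurin's inequality
\begin{equation*}
\Bigl(\prod_{i=1}^{k} a_i\Bigr)^{j/k}\le \frac{s_j(a_1,\ldots,a_k)}{\binom{k}{j}}\le 1
\end{equation*}
then gives $\prod|K_i|=2^k\prod a_i\le 2^k=|B_j^1|^k$, with equality iff $a_i=1$ for all $i$.

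For the inductive step (with the $K_i$ unconditional), I would decompose $K_i^+:=K_i\cap\R^n_+$ via its concave height function $h_i(y):=\sup\{t\ge 0:(y,t)\in K_i\}$ on $\R^{n-1}_+$. Substituting $t_i=h_i(y_i)$ into the polarity condition and using the additive decomposition gives, for all $y_1,\ldots,y_k\in\R^{n-1}_+$,
\begin{equation*}
\sum_{l=1}^{n-1}s_j(y_1(l),\ldots,y_k(l))+s_j(h_1(y_1),\ldots,h_k(y_k))\le \binom{k}{j},
\end{equation*}
and applying Maclaurin to the second term yields the pointwise estimate
\begin{equation*}
\prod_{i=1}^{k} h_i(y_i)\le \Bigl(1-\tfrac{1}{\binom{k}{j}}\sum_{l=1}^{n-1}s_j(y_1(l),\ldots,y_k(l))\Bigr)_+^{k/j}.
\end{equation*}
Integrating $\prod|K_i^+|=\int \prod h_i(y_i)\,dy_1\cdots dy_k$ then reduces the problem to a $k$-variable, $(n-1)$-dimensional integral inequality. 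This reduction is the main obstacle: the naive pointwise bound integrated over $(\R^{n-1}_+)^k$ is too weak (already in the case $n=2$, $j=k=2$ the resulting integral diverges), because it discards the global convexity of each $K_i$. To overcome this I plan to apply Borell-Brascamp-Lieb to the concave height functions $h_i$ and symmetrize one variable at a time, so that the functional data of $(h_1,\ldots,h_k)$ is matched tightly against the $\ell_j^n$-ball and the Maclaurin estimate is saturated exactly on the diagonal $y_1=\cdots=y_k$.

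The case $j=k$ (without unconditionality) can be treated separately by observing that ${\cal S}_k(x_1,\ldots,x_k)=\langle x_1,x_2\odot\cdots\odot x_k\rangle$, with $\odot$ the coordinatewise Hadamard product, so that $K_1\subseteq (K_2\odot\cdots\odot K_k)^\circ$. One can then try to iterate classical Blaschke-Santal\'{o} together with a sharp volume comparison for Hadamard products of convex bodies --- or, alternatively, perform simultaneous Steiner symmetrization of all $K_i$ along each coordinate hyperplane, verifying that the polarity condition and $\prod|K_i|$ are preserved, to reduce $j=k$ to the unconditional case treated above.
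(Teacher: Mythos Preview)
Your plan targets the same two regimes the paper establishes, and the ingredients you name for the unconditional case (Maclaurin plus a multiplicative Pr\'ekopa--Leindler type inequality) are the right ones. But both halves, as written, contain a real gap.

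\textbf{Unconditional case.} The divergence you flag is not a defect of the method but of the order of operations. You first pass to height functions $h_i$ on $\R^{n-1}_+$, bound $\prod_i h_i(y_i)$ pointwise via Maclaurin, and only then try to integrate; this forces you to control a $k(n-1)$-fold integral of the majorant, which indeed blows up. The correct route reverses the two steps. Work in $\R^n_+$, write $x_i=(\tilde x_i,r_i)$, and for \emph{fixed} $r_1,\ldots,r_k>0$ apply Maclaurin only in the last coordinate to get
\[
\mathcal S_j(x_1,\ldots,x_k)\ \ge\ \binom{k}{j}\Bigl(\prod_i r_i\Bigr)^{j/k}+\mathcal S_j(\tilde x_1,\ldots,\tilde x_k).
\]
The $(n-1)$-dimensional slices $\tilde x_i\mapsto 1_{K_i}(\tilde x_i,r_i)$ therefore satisfy $\mathcal S_j$-polarity in $\R^{n-1}$ with respect to a shifted $\rho$, and the inductive hypothesis bounds $\prod_i|K_i\cap\{x_n=r_i\}|$ by a function of $(\prod_i r_i)^{1/k}$ alone. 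The one-dimensional multiplicative Pr\'ekopa--Leindler inequality then closes the induction in a single stroke; no additional symmetrization or vague Borell--Brascamp--Lieb step is needed. This is exactly Ball's inductive scheme, and it is what the paper does.

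\textbf{Case $j=k$.} Simultaneous Steiner symmetrization of all the $K_i$ does \emph{not} preserve $\mathcal E_k$-polarity. Already for $k=j=2$ in $\R^2$: take the parallelogram $K$ with vertices $\pm(1,1),\pm(2,-1)$ and its polar $K^\circ$; one computes that $st_{e_2^\perp}K=[-\tfrac32,\tfrac32]\times[-1,1]$, while $st_{e_2^\perp}(K^\circ)$ contains the point $(\tfrac12,\tfrac13)$, and $\tfrac32\cdot\tfrac12+1\cdot\tfrac13=\tfrac{13}{12}>1$. So your second proposed route fails outright, and the Hadamard-product idea is not a proof either: there is no known sharp volume inequality for $|K_2\odot\cdots\odot K_k|$ of the required strength. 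What does work is the Meyer--Pajor scheme: Steiner-symmetrize \emph{one} body, say $K_1$, leave $K_3,\ldots,K_k$ alone, and replace $K_2$ by the ``$j$-polar'' $K_2':=(st_{e_n^\perp}K_1,K_3,\ldots,K_k)^\circ_j$. The sign identity $s_k(-r_1',r,r_3,\ldots,r_k)=s_k(r_1',-r,r_3,\ldots,r_k)$ lets one average two instances of the polarity constraint to obtain $\tfrac12\bigl(K_2(r)+K_2(-r)\bigr)\subseteq K_2'(r)$, whence $|K_2|\le|K_2'|$ by Brunn--Minkowski. Iterating over coordinates and then over the index $i$ reduces to the unconditional case. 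The same mechanism, with the sign flip applied to all of $r_2,\ldots,r_k$, also yields the case $j$ even with $K_3,\ldots,K_k$ unconditional --- a regime your outline does not reach.
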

\begin{remark}\label{Fradelizi}
	We exclude the case $j=1$, because the quantity $|K_1|\ldots|K_k|$ can be unbounded for bodies $K_1,\ldots,K_k$ satisfying $\mathcal{E}_1$-polarity condition. This can be seen by taking all $K_i$ to be the symmetric slab $\{x\in\R^n:|x_1+\ldots+x_n|\leq1\}$.
\end{remark}
We, also, formulate the functional version of Conjecture \ref{S_j Santalo for bodies}.
\begin{conjecture}(Functional $j$-Santal\'{o} conjecture)\label{jvnljfnv}
Let $2\leq j\leq k$ be two integers, where $k\geq 2$. If $f_1,\ldots,f_k:\R^n\to\R_+$ are even integrable functions, satisfying $\mathcal{S}_j$-polarity condition with respect to some decreasing function $\rho:\R\to[0,\infty]$, then it holds
\begin{equation}\label{S_j-ineq}
\prod_{i=1}^k\int_{\R^n}f_i(x_i)\,dx_i\leq\left(\int_{\mathbb{R}^n}\rho\left({{k}\choose{j}}\|u\|^j_j\right)^{1/k}\, du\right)^{k}.
\end{equation}
\end{conjecture}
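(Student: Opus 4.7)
The natural plan is to split the argument into three stages: a one-dimensional base case, a tensorization step that handles the unconditional $n$-dimensional case, and an attempt to remove the unconditionality hypothesis.

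First, I would try to establish the one-dimensional analogue: if $f_1,\ldots,f_k:\R\to\R_+$ are even integrable and satisfy $\prod_i f_i(t_i)\leq\rho(s_j(t_1,\ldots,t_k))$ for all $t_i\in\R$, then
\begin{equation*}
\prod_{i=1}^k\int_\R f_i(t)\,dt \leq \left(\int_\R\rho\!\left(\binom{k}{j}|t|^j\right)^{1/k}\,dt\right)^k.
\end{equation*}
Evenness lets us restrict to $t_i\geq 0$ (losing matching factors of $2^k$ on each side). When $j=k$, the substitution $t_i=e^{u_i}$ converts $s_k(t_1,\ldots,t_k)=t_1\cdots t_k$ into $e^{u_1+\cdots+u_k}$ and the hypothesis becomes a classical Pr\'ekopa-Leindler/Borell setup, forcing the extremal to be of the form $f_i(t)=c_i\,\rho(|t|^k)^{1/k}$. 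For general $j$, I would try a rearrangement on the level sets $\{s_j(t_1,\ldots,t_k)=\mathrm{const}\}$, exploiting the Schur-concavity properties of $s_j$ together with the monotonicity of $\rho$, and treat $\rho$ via its layer-cake representation to reduce to indicators.

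Second, for the unconditional $n$-dimensional case, the key observation is the coordinate decomposition $\mathcal{S}_j(x_1,\ldots,x_k)=\sum_{l=1}^n s_j(x_1(l),\ldots,x_k(l))$. Restricting to $\R_+^n$ by unconditionality, I would proceed by induction on $n$, peeling off one coordinate at a time: fix $(x_i(1),\ldots,x_i(n-1))$, view the $f_i$ as one-dimensional functions of $x_i(n)$, apply the 1D inequality from step one, and combine with the $(n-1)$-dimensional piece using the monotonicity of $\rho$ and a weighted version of the 1D result. The factor $\binom{k}{j}$ appearing on the right-hand side should be produced naturally as the per-coordinate contribution of $\mathcal{S}_j$ on the diagonal $x_1=\cdots=x_k$.

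Third, the main obstacle is removing unconditionality when $j\geq 3$. For $j=2$, $\mathcal{S}_2$ is orthogonally invariant, which is precisely what allows a Steiner-type symmetrization (as in \cite{KW}) to preserve the polarity condition while pushing the functions toward unconditional ones. For $j\neq 2$, the authors themselves stress that $\mathcal{S}_j$ depends on the basis $\{e_m\}$, so any orthogonal symmetrization destroys the hypothesis. I therefore expect the conjecture in full generality to resist the linear symmetrization strategy that worked for $j=2$, and would aim instead at the special cases indicated in the abstract: $j=k$, where the fully multiplicative structure of $s_k(t_1,\ldots,t_k)=t_1\cdots t_k$ makes the 1D and tensorization steps clean, and a partial unconditional case strengthening Theorem \ref{KW-Theorem}, where the $k-2$ unconditional functions are integrated out first by the plan above, leaving a reduced problem in which the remaining two non-unconditional functions can be attacked through the $j=2$ Blaschke-Santal\'o/Fradelizi-Meyer machinery.
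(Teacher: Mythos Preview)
Your first two stages are broadly aligned with the paper's treatment of the unconditional case: the induction on $n$ peeling off one coordinate at a time is exactly Proposition~\ref{Sjp-BALL-orthants}. The concrete ingredient you do not name in stage~1 is Maclaurin's inequality $s_j(r_1,\ldots,r_k)\geq\binom{k}{j}(r_1\cdots r_k)^{j/k}$ on $\R_+^k$; combined with the monotonicity of $\rho$, this is precisely what converts the peeled-off coordinate into a geometric-mean constraint, so that the multiplicative Pr\'ekopa--Leindler (Theorem~\ref{Prekopa-Leindlre}) applies directly, with no separate ``rearrangement on level sets'' needed. Your ``Schur-concavity'' remark points in this direction but stops short of the actual tool.

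The genuine gap is in stage~3. You dismiss Steiner symmetrization for $j\neq 2$ because $\mathcal{S}_j$ is basis-dependent; this is the wrong conclusion. Basis-dependence rules out symmetrization in \emph{arbitrary} directions, but the paper symmetrizes only with respect to the \emph{fixed} coordinate hyperplanes $e_m^\perp$, and this does preserve $\mathcal{E}_j$-polarity, thanks to the sign identities $s_k(r_1,-r_2,r_3,\ldots,r_k)=s_k(-r_1,r_2,r_3,\ldots,r_k)$ (for $j=k$) and $s_j(r_1,-r_2,\ldots,-r_k)=s_j(-r_1,r_2,\ldots,r_k)$ (for $j$ even with $K_3,\ldots,K_k$ unconditional). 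Successive coordinate symmetrizations render $K_1$ unconditional, and iterating over pairs (case $j=k$) or using the unconditionality of $K_3,\ldots,K_k$ (case~(iii)) reduces everything to the unconditional situation. Your proposed alternatives do not close this gap: for $j=k$, the multiplicative structure plus tensorization only recovers the unconditional case, not general even functions in $\R^n$; and ``integrating out $f_3,\ldots,f_k$'' does not yield a two-function $\mathcal{S}_2$-type problem, since the $\mathcal{S}_j$-constraint couples all $k$ arguments and there is no mechanism to collapse it to a bilinear one. A further structural difference you miss: for cases~(ii) and~(iii) the paper does not argue functionally at all---it proves the body version first via the symmetrization Lemma~\ref{cknsldkcs} and then transfers to functions through a level-set equivalence (Proposition~\ref{vjndsf}).
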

We remark that, unlike the case $j=1$, $|K_1|\ldots|K_k|$ is always bounded, if $j\geq 2$ and $K_1,\ldots,K_k$ satisfy $\mathcal{E}_j$-polarity condition. This follows from Remark \ref{boundremark} below and similar statement also holds for the functional version (see Remark \ref{boundremark2}). 

Clearly, for $j=2$, Conjecture \ref{jvnljfnv} is just Conjecture \ref{conj-K-W-1}.
As it expected, the functional $j$-Santal\'{o} Conjecture \ref{jvnljfnv}  implies the $j$-Santal\'{o} Conjecture \ref{S_j Santalo for bodies} for sets. To see this, let $K_1,\ldots,K_k$ be symmetric convex bodies satisfying $\mathcal{E}_j$-polarity condition. Then, setting  
\begin{equation}\label{observation}
f_i:=1_{K_i} , \  i=1,\ldots,k
\qquad \textnormal{and}\qquad\rho(t):=\begin{cases}
+\infty,&t<0\\
1_{[0,1]}\left({{k}\choose{j}}^{-1}t\right)&t\geq 0\end{cases},
\end{equation}
one can check that the functions $f_1,\ldots,f_k$ satisfy $\mathcal{S}_j$-polarity condition with respect to $\rho$, where with $1_A$ denotes the indicator function of a set $A$. Therefore, inequality \eqref{S_j-ineq} implies,
\begin{equation*}
\prod_{i=1}^k|K_i|\leq\left(\int_{\R^n}\rho\left({{k}\choose{j}}\|u\|^j_j\right)^{\frac{1}{k}}\,du\right)^k=\left(\int_{\R^n}1_{[0,1]}\left(\|u\|^j_j\right)^{\frac{1}{k}}\,du\right)^k=\left(\int_{B^n_j}1\,du\right)^k=|B^n_j|^k.
\end{equation*}
It turns out, however, that the two conjectures are actually equivalent (see Section \ref{Bodies.iff.functions}).
Let us state our main results.
\begin{theorem}\label{phisantalo}
Conjecture \ref{S_j Santalo for bodies} holds in the following cases:
\begin{enumerate}[(i)]
\item $K_1,\ldots, K_k$ are unconditional convex bodies.
\item $j=k$.
\item $j$ is even and $K_3,\ldots,K_k$ are unconditional convex bodies.
\end{enumerate}
Moreover, in all three cases, \eqref{santalo-for-many} is sharp for $K_1=K_2=\ldots=K_k=B_j^n$.
\end{theorem}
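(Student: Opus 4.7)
The plan for all three parts rests on a coordinate-wise application of Maclaurin's inequality: for non-negative reals $a_1,\ldots,a_k$,
\[
\binom{k}{j}^{-1}s_j(a_1,\ldots,a_k)\;\geq\;(a_1\cdots a_k)^{j/k},
\]
so on the positive orthant the $\mathcal{E}_j$-polarity condition forces the ``geometric-mean'' polarity
\[
\sum_{l=1}^n\Bigl(\prod_{i=1}^k x_i(l)\Bigr)^{j/k}\;\leq\;1.
\]
I would also exploit the equivalence between Conjectures \ref{S_j Santalo for bodies} and \ref{jvnljfnv} to move freely between the set and functional settings.

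For (i), unconditionality lets us restrict to the positive orthants $K_i^+=K_i\cap\R^n_+$, and the Maclaurin reduction above then applies. The substitution $y_i(l):=x_i(l)^{j/k}$ (with Jacobian $(k/j)^n\prod_l y_i(l)^{k/j-1}$) converts the constraint into $\sum_l\prod_i y_i(l)\leq 1$, which has clean multilinear structure and tensorises over coordinates; a Pr\'ekopa--Leindler argument reduces matters to $n=1$, where the sharp bound is a direct Maclaurin/AM-GM computation, with equality forcing $K_i=B_j^n$. For (ii), $j=k$, Maclaurin is an equality and no such reduction is available; instead one reads $\sum_l x_1(l)\cdots x_k(l)\leq 1$ as $x_2\star\cdots\star x_k\in K_1^\circ$ (and cyclic variants), a generalised polarity condition involving the Hadamard (coordinatewise) product $\star$. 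I would proceed by induction on $k$, combining the classical Blaschke--Santal\'o inequality $|K_1|\cdot|K_1^\circ|\leq|B_2^n|^2$ with a volume estimate for Hadamard products obtained from a Brunn--Minkowski-type argument in logarithmic coordinates, or directly via Pr\'ekopa--Leindler on the functional version.

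Case (iii), $j$ even with $K_3,\ldots,K_k$ unconditional, is expected to be the main obstacle. Using the unconditionality of $K_3,\ldots,K_k$ we restrict their arguments to $\R^n_+$. The identity
\[
s_j(a,b,c_1,\ldots,c_{k-2})\;=\;s_j(c)\,+\,(a+b)\,s_{j-1}(c)\,+\,ab\,s_{j-2}(c)
\]
separates $s_j$ into pieces even or odd in each of $(a,b)$. Averaging the $\mathcal{E}_j$-polarity condition over the four symmetries $(x_1,x_2)\mapsto(\pm x_1,\pm x_2)$ (available because $K_1,K_2$ are symmetric) yields three weaker constraints; since $j$ is even and $x_3,\ldots,x_k\geq 0$, each of $s_j(c),s_{j-1}(c),s_{j-2}(c)$ is non-negative, and the most useful averaged constraint reads
\[
\sum_{l=1}^n x_1(l)\,x_2(l)\,s_{j-2}(x_3(l),\ldots,x_k(l))\;\leq\;\binom{k}{j}\,-\,\sum_{l=1}^n s_j(x_3(l),\ldots,x_k(l)).
\]
This is a weighted bilinear condition on $(x_1,x_2)$: a rescaled Blaschke--Santal\'o on $(K_1,K_2)$ controls $|K_1||K_2|$, while $\prod_{i\geq 3}|K_i|$ is handled by case (i) applied to the unconditional tuple $K_3,\ldots,K_k$. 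The principal difficulty is to balance these two ingredients so that the final inequality is sharp at $(B_j^n)^k$; this is expected to require a careful optimisation over the auxiliary parameters $(x_3,\ldots,x_k)$ or a well-chosen change of variables, compatible with the Maclaurin reduction from the first paragraph.
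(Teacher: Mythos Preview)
Your treatment of (i) is essentially the paper's: restrict to the positive orthant by unconditionality, apply Maclaurin's inequality coordinatewise, and run an inductive Pr\'ekopa--Leindler argument in dimension (the paper's Proposition \ref{Sjp-BALL-orthants}). That part is fine.

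For (ii) and (iii), however, your proposal has genuine gaps, and the paper takes an entirely different route. In (ii) your inductive scheme via Hadamard products and a ``Brunn--Minkowski-type argument in logarithmic coordinates'' is never made precise; since the $K_i$ are merely symmetric, not unconditional, you cannot reduce to orthants, and there is no evident sharp control of the Hadamard image $K_2\star\cdots\star K_k$ inside $K_1^\circ$ that yields $|B_k^n|^k$. In (iii) you correctly isolate, by averaging over $(\pm x_1,\pm x_2)$, the constraint
\[
\sum_{l} x_1(l)x_2(l)\,s_{j-2}\bigl(x_3(l),\ldots,x_k(l)\bigr)\;\leq\;\binom{k}{j}-\sum_l s_j\bigl(x_3(l),\ldots,x_k(l)\bigr),
\]
but as you yourself concede, both the weights $s_{j-2}$ and the right-hand side depend on $(x_3,\ldots,x_k)$, so there is no fixed bilinear form on which to invoke Blaschke--Santal\'o; splitting $|K_1||K_2|$ from $\prod_{i\geq3}|K_i|$ and optimising separately will not be sharp at $(B_j^n)^k$, and no mechanism is offered to close that gap.

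The paper handles both (ii) and (iii) by Steiner symmetrization (Lemma \ref{cknsldkcs}), reducing to the unconditional case (i). The algebraic input is the parity identity $s_k(r_1,-r_2,r_3,\ldots,r_k)=s_k(-r_1,r_2,r_3,\ldots,r_k)$ for $j=k$, and, for $j$ even (so $s_j(-v)=s_j(v)$) together with the unconditionality of $K_3,\ldots,K_k$, the identity $s_j(r_1,-r_2,\ldots,-r_k)=s_j(-r_1,r_2,\ldots,r_k)$. One sets $K_2=(K_1,K_3,\ldots,K_k)^\circ_j$ and $K_2'=(st_{e_n^\perp}K_1,K_3,\ldots,K_k)^\circ_j$; the identities, combined with the affineness of ${\cal S}_j$ in each argument, give the section inclusion $(K_2(r)+K_2(-r))/2\subseteq K_2'(r)$, whence $|K_2'|\geq|K_2|$ by Brunn--Minkowski, while $|st_{e_n^\perp}K_1|=|K_1|$. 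Iterating over $e_1,\ldots,e_n$ makes $K_1$ unconditional without decreasing the volume product; for $j=k$ one repeats cyclically through the remaining bodies. This sidesteps entirely the optimisation problem you flagged, and the sharpness at $B_j^n$ is then just the AM--GM verification that the tuple $(B_j^n,\ldots,B_j^n)$ satisfies ${\cal E}_j$-polarity.
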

\begin{theorem}\label{KWconj}
Conjecture \ref{jvnljfnv} holds in the following cases:
\begin{enumerate}[(i)]
\item $f_1,\ldots, f_k$ are unconditional functions.
\item $j=k$.
\item $j$ is even and $f_3,\ldots,f_k$ are unconditional functions.  
\end{enumerate}
\end{theorem}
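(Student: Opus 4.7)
I would derive Theorem~\ref{KWconj} from Theorem~\ref{phisantalo} by establishing the equivalence of Conjectures~\ref{S_j Santalo for bodies} and~\ref{jvnljfnv} asserted in the introduction (\textit{cf.} Section~\ref{Bodies.iff.functions}). The forward direction \emph{functional $\Rightarrow$ set} is already given in~\eqref{observation}. The work lies in the reverse direction, \emph{set $\Rightarrow$ functional}, after which each of the three cases (i)--(iii) of Theorem~\ref{KWconj} follows at once from the corresponding case of Theorem~\ref{phisantalo}, because the reduction preserves unconditionality and the value of~$j$.

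\textbf{Level-set reduction.} Given even integrable $f_1,\ldots,f_k:\R^n\to\R_+$ satisfying $\mathcal{S}_j$-polarity with respect to a decreasing $\rho$, I would study the symmetric upper level sets $K_i(t):=\{f_i\geq t\}$. Fix $s\geq 0$ and thresholds $t_1,\ldots,t_k>0$ with $\prod_i t_i=\rho(s)$. For $x_i\in K_i(t_i)$, $\prod_i f_i(x_i)\geq\rho(s)$ forces $\mathcal{S}_j(x_1,\ldots,x_k)\leq s$ via monotonicity of~$\rho$. Since $\mathcal{S}_j$ is $j$-homogeneous under simultaneous scaling, the rescaled sets $\lambda_s K_i(t_i)$ with $\lambda_s:=(\binom{k}{j}/s)^{1/j}$ satisfy the $\mathcal{E}_j$-polarity condition, and a suitable version of Theorem~\ref{phisantalo} yields
\begin{equation*}
\prod_{i=1}^k|K_i(t_i)|\leq\left(\frac{s}{\binom{k}{j}}\right)^{\!nk/j}|B_j^n|^k.
\end{equation*}
Layer-cake and Fubini give $\prod_i\int f_i=\int_{\R_+^k}\prod_i|K_i(t_i)|\,dt$; combined with the preceding bound and with polar coordinates for $\|\cdot\|_j$ applied to the right-hand side of~\eqref{S_j-ineq}, both sides should reduce, after a careful change of variables separating a ``size'' variable (defined by $\rho(s)=\prod_i t_i$) from ``shape'' variables $t_i/t_1$, to the same one-dimensional integral in $s$; the constants then match by a routine substitution and are verified by the extremizer $f_i(x)=\rho(\binom{k}{j}\|x\|_j^j)^{1/k}$.

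\textbf{Main obstacles.} Two technical hurdles stand out. First, when $f_i$ is not quasi-concave the $K_i(t)$ are not convex, so Theorem~\ref{phisantalo} cannot be applied directly; one either extends it to symmetric measurable sets (which is plausible since $\mathcal{E}_j$-polarity is a pointwise condition) or pre-processes $f_i$ by passing to its $\mathcal{S}_j$-polar envelope---the largest even quasi-concave function still satisfying the hypothesis---an operation that should not decrease $\int f_i$. Second, the bound $\prod|K_i(t_i)|\leq(s/\binom{k}{j})^{nk/j}|B_j^n|^k$ only depends on $\prod_i t_i$ and is badly non-tight on unbalanced configurations (small $t_1$, huge $t_k$, etc.), so naive integration over $\R_+^k$ would diverge; this forces a H\"older-type rebalancing---parametrising $t_i=\rho(s)^{1/k}a_i$ with $\prod_i a_i=1$ and integrating the $a$-directions against the decay coming from boundedness of $\|f_i\|_\infty$---to keep the Fubini step convergent. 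Overcoming these two issues is the main obstacle; once handled, the individual cases (i)--(iii) of Theorem~\ref{KWconj} follow immediately by invoking the matching case of Theorem~\ref{phisantalo}.
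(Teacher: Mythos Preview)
Your overall strategy---reduce the functional statement to the set statement via level sets, then invoke Theorem~\ref{phisantalo}---is exactly what the paper does (Proposition~\ref{vjndsf}). But the two ``obstacles'' you flag are both handled in the paper by clean tools you seem to have missed, and your proposed workarounds are either unnecessary or incomplete.

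\textbf{Non-convexity of level sets.} There is no need to extend Theorem~\ref{phisantalo} to non-convex sets or to invent an envelope construction. Since $\mathcal{S}_j$ is affine in each argument, $\mathcal{E}_j$-polarity passes to convex hulls (Lemma~\ref{kjsdncs}); one simply replaces $\lambda_s K_i(t_i)$ by $\textnormal{conv}(\lambda_s K_i(t_i))$, applies Theorem~\ref{phisantalo}, and uses $|K_i(t_i)|\leq|\textnormal{conv}(K_i(t_i))|$. This is one line.

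\textbf{The integration step.} Your proposed ``H\"older-type rebalancing'' is the real gap. The bound you obtain, $\prod_i|K_i(t_i)|\leq C\,\rho^{-1}(\prod_i t_i)^{nk/j}$, depends only on $\prod_i t_i$, so no change of variables in $\R_+^k$ can reduce the $k$-fold integral to a single integral---the integral over your shape variables $a_i$ is genuinely infinite if you try to bound it using only this inequality. What is needed is not a change of variables but an inequality: setting $\varphi_i(r)=|K_i(r)|$ and $\varphi(r)=\binom{k}{j}^{-n/j}|B_j^n|\,\rho^{-1}(r^k)^{n/j}$, your level-set bound reads $(\varphi_1(r_1)\cdots\varphi_k(r_k))^{1/k}\leq\varphi((r_1\cdots r_k)^{1/k})$, and the one-dimensional multiplicative Pr\'ekopa--Leindler inequality (Theorem~\ref{Prekopa-Leindlre}) then gives $\prod_i\int_0^\infty\varphi_i\leq(\int_0^\infty\varphi)^k$. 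Layer-cake on both sides finishes the proof. This is the step you were groping for; once you name it, the argument is short and the constants match automatically (see~\eqref{eq-prop-C2->C1-1}--\eqref{eq-prop-C2->C1-3}).
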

Notice that, by \eqref{observation}, \eqref{S_j-ineq} is also sharp for some specific choice of $\rho$.
As mentioned earlier, Theorem \ref{KWconj} (case (iii), $j=2$) slightly extends Theorem \ref{KW-Theorem}. Moreover, Theorem \ref{phisantalo} (resp. \ref{KWconj}) for $j=k$ can be viewed as a generalization of the classical Blasckhe-Santal\'{o} inequality in the setting of many sets (resp. many functions). Although, due to \eqref{observation}, Theorem \ref{KWconj} implies immediately Theorem \ref{phisantalo}, for parts (ii) and (iii), our approach requires  to first establish Theorem \ref{phisantalo} and then deduce Theorem \ref{KWconj} from it and the equivalence between Conjectures \ref{S_j Santalo for bodies} and \ref{jvnljfnv} mentioned above. 
The unconditional case is established in Section \ref{P.L.-unc-section}. Part (ii) for $j=k$ and part (iii) are established in Section \ref{Steiner-symmetrization}, using symmetrization arguments, similar to the ones used by Meyer-Pajor \cite{M.P.}. Finally, the equivalence of Conjectures \ref{S_j Santalo for bodies} and \ref{jvnljfnv} is demonstrated in Section \ref{Bodies.iff.functions}.

K. Ball \cite{Ball-f-Santalo}  \cite{Ball-conjecture}, defined the following $SL(n)$-invariant quantity
\begin{equation*}
B(K):=\int_{K}\int_{K^o}\langle x,y \rangle^2\,dx\,dy.  
\end{equation*}
Using the multiplicative version of Pr\'ekopa-Leindler, (Theorem \ref{Prekopa-Leindlre} below), he obtained the following.
\begin{oldtheorem}\label{Ball-Theorem}
If $K$ is an unconditional convex body in $\R^n$, then $$B(K)\leq B(B_2^n).$$
\end{oldtheorem}
Ball conjectured the following.
\begin{conjecture}\label{Ball-conjec}
Theorem \ref{Ball-Theorem} holds true for arbitrary symmetric convex bodies.
\end{conjecture}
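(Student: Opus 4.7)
The plan is to reduce Conjecture \ref{Ball-conjec} to a weighted Blaschke--Santal\'{o} inequality and then establish that inequality by iterated joint Steiner symmetrization of $K$ and $K^\circ$, in the spirit of the symmetrization arguments of Section \ref{Steiner-symmetrization}. The starting point is the $SL(n)$-invariance $B(TK)=B(K)$ for $T\in SL(n)$, which follows from $(TK)^\circ=(T^{-1})^T K^\circ$ together with a standard change of variables. Hence I may assume $K$ is in isotropic position: $\int_K x_ix_j\,dx=(I_K/n)\delta_{ij}$, where $I_L:=\int_L|x|^2\,dx$. Expanding $\langle x,y\rangle^2=\sum_{i,j}x_ix_jy_iy_j$ and using isotropy to kill the off-diagonal moments yields $B(K)=\tfrac{1}{n}I_K\,I_{K^\circ}$; the identity is immediate for $K=B_2^n$. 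Thus Conjecture \ref{Ball-conjec} is equivalent to the weighted Blaschke--Santal\'{o} inequality $I_K\,I_{K^\circ}\leq I_{B_2^n}^2$ over all symmetric convex $K$ in isotropic position.

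To attack this weighted inequality I would aim to show that $I_K\,I_{K^\circ}$ does not decrease under joint Steiner symmetrization $S_H$ of $K$ and $K^\circ$ in any hyperplane $H$ through the origin. Using the layer-cake representation $|x|^2=\int_0^\infty 1_{\{|x|^2>s\}}\,ds$, the weighted product factors as a double integral over $s,t\geq 0$ of volumes of the annular bodies $K\setminus\sqrt{s}\,B_2^n$ and $K^\circ\setminus\sqrt{t}\,B_2^n$. Since Euclidean balls are Steiner-symmetric in every direction, these annular bodies transform coherently with $K$ and $K^\circ$ under $S_H$, and the classical Meyer--Pajor inclusion $(S_HK)^\circ\supseteq S_H(K^\circ)$ that drives the proof of Blaschke--Santal\'{o} feeds into a fiber-wise analysis of each weighted integral. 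Iterating in a dense sequence of directions, combined with the Hausdorff continuity of $I_K$ and $I_{K^\circ}$ (ensured since $0\in\inter K$), reduces matters to a Euclidean ball $cB_2^n$, for which the target equality holds.

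The fiber-wise analysis reduces to a sharp one-dimensional weighted Santal\'{o}-type inequality for pairs of intervals (one arising from a fiber of $K$, the other from a conjugate fiber of $K^\circ$) weighted by $t^2$. An analogous unweighted one-dimensional comparison underlies the proofs of Theorem \ref{phisantalo}(ii)--(iii), and the present weighted variant can be attempted by direct computation, leveraging the convexity of $t^2$ and classical rearrangement inequalities.

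The principal obstacle is the monotonicity of the weighted product under joint Steiner symmetrization. The factor $I_K$ alone strictly decreases under every non-trivial Steiner symmetrization (centering a one-dimensional interval minimizes its second moment), so any proof must exhibit a quantitatively sufficient gain in $I_{K^\circ}$ to compensate. Proving the resulting sharp weighted one-dimensional inequality appears to require arguments beyond the scope of classical Meyer--Pajor. A secondary issue is that Steiner symmetrization does not preserve isotropic position, so one must either interleave symmetrizations with $SL(n)$-rebalancing maps to restore isotropy after each step, or altogether bypass the isotropy reduction by working directly with $B(K)=\textnormal{tr}(M^K M^{K^\circ})$, where $M^K:=\bigl(\int_K x_ix_j\,dx\bigr)_{i,j}$ is the matrix of second moments; this alternative forces one to track the off-diagonal entries of $M^K$ and $M^{K^\circ}$ throughout the symmetrization process.
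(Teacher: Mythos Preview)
The statement you are attempting to prove is Ball's conjecture, which the paper explicitly presents as an \emph{open} conjecture; the paper offers no proof of it, only records its connection to the $j$-Santal\'{o} conjecture and the unconditional case (Theorem~\ref{Ball-Theorem}). There is therefore no ``paper's own proof'' against which to compare.

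Your reduction via $SL(n)$-invariance and isotropic position to the weighted product inequality $I_K\,I_{K^\circ}\leq I_{B_2^n}^2$ is correct and standard. What follows, however, is not a proof but a strategy whose central step you yourself flag as the ``principal obstacle'': the claimed monotonicity of $I_K\,I_{K^\circ}$ under joint Steiner symmetrization. You correctly note that $I_K$ strictly \emph{decreases} under any nontrivial Steiner symmetrization, so the entire argument hinges on showing that $I_{(S_HK)^\circ}$ increases by enough to compensate. The Meyer--Pajor inclusion $(S_HK)^\circ\supseteq S_H(K^\circ)$ only gives $I_{(S_HK)^\circ}\geq I_{S_H(K^\circ)}$, and the right-hand side is itself \emph{smaller} than $I_{K^\circ}$; this chain points in the wrong direction and yields no usable lower bound on the gain. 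The ``sharp weighted one-dimensional inequality'' you invoke is never stated, let alone proved, and no rearrangement or convexity argument of the kind you gesture at is known to deliver it --- if it were, Ball's conjecture would not remain open. The secondary issue (loss of isotropy under symmetrization, forcing either interleaved $SL(n)$-corrections or a direct trace formulation) is genuine but subordinate: even with that bookkeeping resolved, the core monotonicity claim is unproven.

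In short, your proposal accurately locates the difficulty but does not overcome it; as written it does not constitute a proof of Conjecture~\ref{Ball-conjec}.
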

 We refer to Conjecture \ref{Ball-conjec} as Ball's conjecture. We should remark that Ball has shown that Conjecture \ref{Ball-conjec}, if true, implies the Blaschke-Santal\'{o} inequality
\eqref{classical-santalo} (in the sense that given the validity of Conjecture \eqref{Ball-conjec}, one can prove \eqref{classical-santalo} within a few lines). In Section \ref{multi-Ball-section}, we formulate the analogue of Ball's conjecture for many sets that satisfy ${\cal E}_j$-polarity condition (resp. many functions that satisfy ${
\cal S}_j$-polarity condition). We prove it in the unconditional case and we show that this implies Conjectures \ref{S_j Santalo for bodies} and \ref{jvnljfnv}.
\section{ 
The unconditional case}\label{P.L.-unc-section}
\hspace*{1.5em}A function $f:\R^n\to\R$ is said to be unconditional, if $f(\delta_1 x_1,\ldots,\delta_n x_n)=f(x_1,\ldots,x_n)$, for any $\delta_1,\ldots,\delta_n\in\{-1,1\}$ and any $(x_1,\ldots,x_n)\in\R^n$. A subset $A$ in $\R^n$ is unconditional if its indicator function $1_A$ is unconditional.
In this section, we establish Conjectures \ref{S_j Santalo for bodies} and \ref{jvnljfnv} for sets contained in an orthant (resp. functions supported in an orthant). Since we wish to obtain slightly more general results, we need to modify the definition of the functions ${\cal S}_j$ and ${\cal E}_j$ introduced previously. Namely, for any two integers $1\leq j\leq k$ and any positive real number $p>0$, set
\begin{equation*}\label{s_jp}
s_{j,p}(r_1,\ldots,r_k)=\sum_{1\leq i_1<\ldots<i_j\leq k}|r_{i_1}|^p\ldots |r_{i_j}|^p,\qquad r_1,\ldots,r_k\in\R,
\end{equation*}

\begin{equation*}
S_{j,p}(x_1,\ldots,x_k):=\sum_{l=1}^ns_{j,p}(x_1(l),\ldots,x_k(l)),\qquad x_1,\ldots,x_k\in\R^n
\end{equation*}
and $$\mathcal{E}_{j,p}:=\frac{S_{j,p}}{{{k}\choose{j}}}.$$ Recall that $x_i(l):=\langle x_i,e_l\rangle$, $i=1,\ldots,k$, $l=1,\ldots,n$.

We refer to Borell \cite{Borell}, Ball \cite{Ball-conjecture} \cite{Ball-last} , Uhrin \cite{Uhrin} for the following version of the classical Pr\'ekopa-Leindler inequality (see also \cite{Pr}, \cite{le}).
\begin{oldtheorem}(1-dimensional multiplicative Pr\'ekopa-Leindler inequality)\label{Prekopa-Leindlre}
If some integrable functions  $h,h_i:\R_+\to\R_+$, $i=1,\ldots,k$, satisfy
\begin{equation*}\label{PL.condition}
\prod_{i=1}^kh_i(t_i)^{\frac{1}{k}}\leq h\left(\prod_{i=1}^kt_i^{\frac{1}{k}}\right),\qquad \forall t_i>0, \ i=1\ldots,k,
\end{equation*}
then it holds \begin{equation*}\label{P.L.}
\prod_{i=1}^k\left(\int_{\R_+}h_i(t_i)\,dt_i\right)^{\frac{1}{k}}\leq \int_{\R_+}h(t)\,dt.
\end{equation*}
\end{oldtheorem}

The result below slightly generalizes Theorem \ref{KW-Theorem} and will follow by a more or less standard application of the Pr\'ekopa-Leindler inequality (Theorem \ref{Prekopa-Leindlre}), which uses the inductive argument of K. Ball \cite{Ball-f-Santalo, Ball-conjecture}.
\begin{proposition}\label{Sjp-BALL-orthants}
Let $p>0$, $q>-1$ be real numbers and $1\leq j\leq k$ be two integers, where $k\geq2$. For any integrable functions $f_i:\R^n_+\to\R_+$, $i=1,\ldots,k$, satisfying $S_{j,p}$-polarity condition with respect to some decreasing function $\rho:\R\to[0,\infty]$,
and any $m\in\{1,\ldots,n\}$, it holds
\begin{equation}\label{BALL-unconditional-S,j,p}
\prod_{i=1}^k\int_{\R^n_+}|\langle x_i,e_m\rangle|^qf_i(x_i)\,dx_i\leq\left(\int_{\R^n_+}|\langle u,e_1\rangle|^q\rho\left({{k}\choose{j}}\|u\|_{jp}^{jp}\right)^{\frac{1}{k}}\,du\right)^k.
\end{equation}
\end{proposition}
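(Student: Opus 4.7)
My plan is to prove the proposition by induction on the dimension $n$, following the two-step Pr\'ekopa--Leindler strategy of K.\ Ball. The one algebraic input I will use repeatedly is the AM--GM bound
$$s_{j,p}(t_1,\ldots,t_k) \ \ge\ \binom{k}{j}\Big(\prod_{i=1}^k t_i^{1/k}\Big)^{jp}\qquad (t_i\ge 0),$$
valid because each variable $t_i$ appears in exactly $\binom{k-1}{j-1}$ of the $\binom{k}{j}$ monomials of $s_{j,p}$, and $\binom{k-1}{j-1}/\binom{k}{j}=j/k$.

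For the base case $n=1$, I apply Theorem \ref{Prekopa-Leindlre} to the functions $h_i(t):=t^q f_i(t)$ and $h(s):=s^q\,\rho(\binom{k}{j}s^{jp})^{1/k}$. The $s_{j,p}$-polarity hypothesis on the $f_i$, combined with the monotonicity of $\rho$ and the AM--GM bound above, verifies the Pr\'ekopa--Leindler hypothesis at $s=\prod_i t_i^{1/k}$, and the conclusion is precisely \eqref{BALL-unconditional-S,j,p} in one dimension.

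For the inductive step, I fix $m\in\{1,\ldots,n\}$, choose any $l\neq m$, and split each $x_i\in\R^n_+$ as $(\widetilde{x}_i, t_i)$ with $t_i=\langle x_i,e_l\rangle$. Freezing $t=(t_1,\ldots,t_k)$, the functions $F_i^{(t)}(\widetilde{x}):=f_i(\widetilde{x},t_i)$ on $\R^{n-1}_+$ satisfy an $S^{(n-1)}_{j,p}$-polarity condition with respect to the decreasing function $\rho_t(u):=\rho(u+s_{j,p}(t))$, thanks to the additive decomposition $S_{j,p}((\widetilde{x}_i,t_i)_{i=1}^k)=S^{(n-1)}_{j,p}((\widetilde{x}_i)_{i=1}^k)+s_{j,p}(t)$. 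The inductive hypothesis (applied at the coordinate matching $e_m$ in $\R^{n-1}$; note that the right-hand side of the proposition is invariant under coordinate permutations, so ``$e_1$'' there can be replaced by any basis vector) yields
$$\prod_i g_i(t_i) \ \le\ \Big(\int_{\R^{n-1}_+}|\langle u,e_1\rangle|^q\,\rho\bigl(\tbinom{k}{j}\|u\|_{jp}^{jp}+s_{j,p}(t)\bigr)^{1/k}\,du\Big)^k,$$
where $g_i(t_i):=\int_{\R^{n-1}_+}|\langle\widetilde{x}_i,e_m\rangle|^q f_i(\widetilde{x}_i,t_i)\,d\widetilde{x}_i$. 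Invoking the AM--GM inequality once more on $s_{j,p}(t)$ and using that $\rho$ is decreasing, this right-hand side is dominated by $h(\prod_i t_i^{1/k})^k$, where
$$h(s):=\int_{\R^{n-1}_+}|\langle u,e_1\rangle|^q\,\rho\bigl(\tbinom{k}{j}(\|u\|_{jp}^{jp}+s^{jp})\bigr)^{1/k}\,du.$$
A second application of Theorem \ref{Prekopa-Leindlre}, now to $g_1,\ldots,g_k$ and $h$, then gives $\prod_i\int_{\R_+}g_i(t_i)\,dt_i\le\bigl(\int_{\R_+}h(s)\,ds\bigr)^k$; by Fubini the left side is the desired product, while reassembling $(u,s)\in\R^{n-1}_+\times\R_+=\R^n_+$ on the right matches the target integral in \eqref{BALL-unconditional-S,j,p}. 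The hypothesis $q>-1$ is what ensures integrability at $t=0$ throughout.

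The only obstacle I anticipate is bookkeeping: keeping the splitting coordinate $e_l$, the distinguished coordinate $e_m$ on the left, and the coordinate displayed on the right consistent (in particular, in the subcase $m=n$ one is forced to split off a coordinate other than $e_n$, and one leans on the coordinate-symmetry of the right-hand side). Otherwise, the argument is a direct adaptation of the $L^p$ version of Ball's inductive Pr\'ekopa--Leindler technique, with the single numerical identity $\binom{k-1}{j-1}/\binom{k}{j}=j/k$ driving each AM--GM step.
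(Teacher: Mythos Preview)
Your proposal is correct and follows essentially the same inductive Pr\'ekopa--Leindler strategy as the paper (the AM--GM bound you use is exactly the Maclaurin inequality invoked there). The only organizational difference is that the paper sets $m=n$ and splits off the distinguished coordinate itself, so the factor $r_i^q$ is carried in the \emph{outer} Pr\'ekopa--Leindler step and the inner inductive call is made with $q=0$, whereas you split off a coordinate $l\neq m$ and carry the $|\langle\cdot,e_m\rangle|^q$ weight through the \emph{inner} inductive call; both choices work and the remaining bookkeeping (coordinate symmetry of the right-hand side) is handled the same way.
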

\begin{proof}
We may assume that $m=n$. We will prove Proposition \ref{Sjp-BALL-orthants} by induction in the dimension $n$. Since we want to deduce the base case $n=1$ simultaneously with the inductive step, it is useful to make some conventions: For a function $\varphi:\R_+\to\R_+$, we set $\varphi(0,r):=\varphi(r)$, $\int_{\R_+^0}\varphi(x)\, dx:=\varphi(0)$ and 
$\int_{\R^0_+}\varphi(x,r)\, dx:=\varphi(0,r)=\varphi(r)$, $r\geq 0$. 
Assume that \eqref{BALL-unconditional-S,j,p} holds for the non-negative integer $n-1$, where $n\geq 2$.  
In the inductive step (resp. the case $n=1$), notice that for $(\widetilde{x}_i,r_i)\in\R_+^n$ (resp. $r_i\in\R_+$), $i=1,\ldots,k$, the $S_{j,p}$-polarity condition together with  Maclaurin's inequality (stating that ${\cal E}_{j_1}^{1/j_1}\geq {\cal E}_{j_2}^{1/j_2}$, if $j_1\leq j_2$) and the monotonicity of $\rho$ imply
\begin{equation*}
\prod_{i=1}^kf_i(\tilde{x}_i,r_i)\leq \rho\left(s_{j,p}(r_1,\ldots,r_k)+S_{j,p}(\tilde{x}_1,\ldots,\tilde{x}_k)\right)
\leq\rho\left({{k}\choose{j}}(r_1\ldots r_k)^{\frac{jp}{k}}+S_{j,p}(\tilde{x}_1,\ldots,\tilde{x}_k)\right),
\end{equation*}where $\widetilde{x}_1=\ldots=\widetilde{x}_k:=0$, if $n=1$.
Multiplying by $\prod_{i=1}^kr_i^q$ we get
\begin{equation}\label{pro-ind}
\prod_{i=1}^kr_i^qf_i(\tilde{x_i},r_i)\leq\prod_{i=1}^kr_i^q\cdot\rho\left({{k}\choose{j}}\prod_{i=1}^kr_i^{\frac{jp}{k}}+S_{j,p}(\tilde{x}_1,\ldots,\tilde{x}_k)\right).
\end{equation}
For fixed $r_1,\ldots,r_k> 0$, set
\begin{equation*}
\widetilde{\rho}(t):=\prod_{i=1}^kr_i^q\cdot\rho\left({{k}\choose{j}}\prod_{i=1}^kr_i^{\frac{jp}{k}}+t\right),\qquad t\geq 0.
\end{equation*}
Applying the inductive hypothesis for $q=0$ to \eqref{pro-ind} if $n\geq 2$ or the conventions made above if $n=1$, we obtain
\begin{eqnarray}\label{ind.step}
\prod_{i=1}^k\int_{\R^{n-1}_+}r_i^qf_i(\tilde{x}_i,r_i)\,d\tilde{x}_i&\leq& \left(\int_{\R^{n-1}_+}\widetilde{\rho}\left({{k}\choose{j}}\|\tilde{u}\|_{jp}^{jp}\right)^{\frac{1}{k}}\,d\tilde{u}\right)^k\nonumber\\
&=&\left( \left(\prod_{i=1}^kr_i^{\frac{1}{k}}\right)^q\int_{\R^{n-1}_+}
\rho\left({{k}\choose{j}}\prod_{i=1}^kr_i^{\frac{jp}{k}}+{{k}\choose{j}}\|\tilde{u}\|^{jp}_{jp}\right)^{\frac{1}{k}} \,d\tilde{u}\right)^k.
\end{eqnarray}
For $t,r_i>0$, $i=1,\ldots,k$, set $$h_i(r_i):=\int_{\R^{n-1}_+}r_i^qf_i(\tilde{x}_i,r_i)\,d\tilde{x}_i
\qquad\textnormal{and}\qquad h(t):=t^q\int_{\R^{n-1}_+}\rho\left({{k}\choose{j}}(t^{jp}+\|\tilde{u}\|^{jp}_{jp})\right)^{\frac{1}{k}}\,d\tilde{u}.
$$
Then, by \eqref{ind.step}, the functions $h,h_1,\ldots,h_k$ satisfy the assumption of Theorem \ref{Prekopa-Leindlre}, hence
\begin{equation*}\label{prekopa-leilder}
\prod_{i=1}^k\int_{\R_+}\int_{\R^{n-1}_+}r_i^qf_i(\tilde{x_i},r_i)\,d\tilde{x}_i\,dr_i\leq \left( \int_{\R_+}t^q\int_{\R^{n-1}_+}\rho\left({{k}\choose{j}}\big(t^{jp}+\|\tilde{u}\|^{jp}_{jp}\big)\right)^{\frac{1}{k}}\,d\tilde{u}\,dt \right)^k.
\end{equation*}
The assertion follows by Fubini's Theorem.
\end{proof}
Setting $q=0$ and $p=1$ to Proposition \ref{Sjp-BALL-orthants}, it follows immediately that Conjecture \ref{jvnljfnv} holds true for functions $f_1,\ldots,f_k$ supported in $\R^n_+$. Moreover, by \eqref{observation}, Conjecture \ref{S_j Santalo for bodies} holds  for convex bodies $K_1,\ldots, K_k$, contained in $\R^n_+$. In particular, case (i) of Theorems \ref{phisantalo} and \ref{KWconj} follows from the previous discussion.  
\begin{corollary}\label{main.results.for.unco.}
Conjectures \ref{S_j Santalo for bodies} and \ref{jvnljfnv} hold in the unconditional setting.
\end{corollary}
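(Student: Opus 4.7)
The plan is to reduce the unconditional case directly to the orthant case already handled by Proposition \ref{Sjp-BALL-orthants}, via the elementary $2^n$-fold symmetry enjoyed by unconditional objects.

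For the functional statement, I would take unconditional integrable $f_1,\ldots,f_k:\R^n\to\R_+$ satisfying the $\mathcal{S}_j$-polarity condition with respect to some decreasing $\rho$. The polarity condition is imposed for all $x_i\in\R^n$, so \emph{a fortiori} the restrictions $f_i|_{\R^n_+}$ satisfy it on the positive orthant, and Proposition \ref{Sjp-BALL-orthants} (with $p=1$ and $q=0$) applies to give
\begin{equation*}
\prod_{i=1}^k\int_{\R^n_+}f_i(x_i)\,dx_i\leq\left(\int_{\R^n_+}\rho\left({{k}\choose{j}}\|u\|^j_j\right)^{1/k}du\right)^k.
\end{equation*}
Because each $f_i$ is unconditional, $\int_{\R^n}f_i=2^n\int_{\R^n_+}f_i$; and because the integrand $u\mapsto\rho({{k}\choose{j}}\|u\|_j^j)^{1/k}$ depends only on $|u(1)|,\ldots,|u(n)|$, the identical factor of $2^n$ relates the right-hand integral over $\R^n_+$ to the one over $\R^n$. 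Multiplying the displayed inequality through by $2^{nk}$ then delivers exactly \eqref{S_j-ineq}, proving Conjecture \ref{jvnljfnv} for unconditional $f_i$.

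For the set version I would simply feed $f_i=1_{K_i}$ together with the $\rho$ from \eqref{observation} into the functional result just established: these are unconditional whenever $K_1,\ldots,K_k$ are, and, as already computed in the discussion following \eqref{observation}, the resulting functional inequality collapses to $\prod_{i=1}^k|K_i|\leq |B^n_j|^k$, which is \eqref{santalo-for-many}.

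There is no real obstacle at this stage: all of the analytic content --- the multiplicative Pr\'ekopa--Leindler inequality combined with Ball's inductive scheme --- has already been spent in Proposition \ref{Sjp-BALL-orthants}, and the present statement amounts to nothing more than a symmetry reduction, which is why the authors can legitimately record it as a one-line corollary.
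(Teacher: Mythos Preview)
Your proposal is correct and follows essentially the same route as the paper: apply Proposition \ref{Sjp-BALL-orthants} with $p=1$, $q=0$ on the positive orthant (where $\mathcal{S}_j$ and $S_{j,1}$ agree), and then pass from $\R^n_+$ to $\R^n$ via the $2^{nk}$ factor coming from unconditionality. The paper leaves this last symmetry step implicit in the phrase ``in particular,'' whereas you spell it out; beyond that there is no difference.
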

We also have the following.
\begin{corollary}\label{S_jpbcsd}
Proposition \ref{Sjp-BALL-orthants} holds if $\R^n_+$ is replaced by $\R^n$.
\end{corollary}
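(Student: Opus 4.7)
The plan is to reduce the claim to Proposition \ref{Sjp-BALL-orthants} by an orthant decomposition, exploiting the fact that the functions $s_{j,p}$ (and hence $S_{j,p}$) depend on their arguments only through absolute values of coordinates. For each $\epsilon=(\epsilon_1,\dots,\epsilon_n)\in\{-1,1\}^n$, let $T_\epsilon:\R^n\to\R^n$ be the coordinate-wise sign map $T_\epsilon(x)=\sum_{l=1}^n \epsilon_l \langle x,e_l\rangle e_l$, and let $\R^n_\epsilon:=T_\epsilon(\R^n_+)$ denote the corresponding closed orthant, so that $\R^n=\bigcup_\epsilon \R^n_\epsilon$ up to a null set.

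First, I would fix a $k$-tuple of sign vectors $(\epsilon^{(1)},\dots,\epsilon^{(k)})\in(\{-1,1\}^n)^k$ and define $g_i:\R^n_+\to\R_+$ by $g_i(y):=f_i(T_{\epsilon^{(i)}}y)$. Since $s_{j,p}$ depends only on absolute values of its arguments, $S_{j,p}(T_{\epsilon^{(1)}}y_1,\dots,T_{\epsilon^{(k)}}y_k)=S_{j,p}(y_1,\dots,y_k)$, and therefore the $S_{j,p}$-polarity condition for $f_1,\dots,f_k$ passes to $g_1,\dots,g_k$ with respect to the same $\rho$. The linear substitution $x_i=T_{\epsilon^{(i)}}y_i$ preserves Lebesgue measure and preserves $|\langle \cdot,e_m\rangle|^q$, so Proposition \ref{Sjp-BALL-orthants} applied to the $g_i$ yields
\begin{equation*}
\prod_{i=1}^k\int_{\R^n_{\epsilon^{(i)}}}|\langle x_i,e_m\rangle|^q f_i(x_i)\,dx_i \;\leq\; \left(\int_{\R^n_+}|\langle u,e_1\rangle|^q \rho\!\left(\tbinom{k}{j}\|u\|_{jp}^{jp}\right)^{1/k}du\right)^{k}.
\end{equation*}

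Next, I would expand the full product of integrals as a sum of $2^{nk}$ orthant contributions,
\begin{equation*}
\prod_{i=1}^k\int_{\R^n}|\langle x_i,e_m\rangle|^q f_i(x_i)\,dx_i=\sum_{(\epsilon^{(1)},\dots,\epsilon^{(k)})}\prod_{i=1}^k\int_{\R^n_{\epsilon^{(i)}}}|\langle x_i,e_m\rangle|^q f_i(x_i)\,dx_i,
\end{equation*}
bound each summand by the previous inequality, and observe that the right-hand side above is unconditional in $u$, so $\int_{\R^n}|\langle u,e_1\rangle|^q\rho(\binom{k}{j}\|u\|_{jp}^{jp})^{1/k}du=2^n\int_{\R^n_+}|\langle u,e_1\rangle|^q\rho(\binom{k}{j}\|u\|_{jp}^{jp})^{1/k}du$. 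Putting these together,
\begin{equation*}
\prod_{i=1}^k\int_{\R^n}|\langle x_i,e_m\rangle|^q f_i(x_i)\,dx_i\leq 2^{nk}\left(\int_{\R^n_+}\cdots\,du\right)^{k}=\left(\int_{\R^n}|\langle u,e_1\rangle|^q\rho\!\left(\tbinom{k}{j}\|u\|_{jp}^{jp}\right)^{1/k}du\right)^{k},
\end{equation*}
which is the desired inequality. There is no real obstacle here; the only conceptual point is the sign-invariance of $S_{j,p}$, and the rest is an orthant decomposition combined with the unconditionality of the extremal integrand.
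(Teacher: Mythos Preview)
Your proposal is correct and follows essentially the same approach as the paper: an orthant decomposition, reflection of each $f_i$ back to $\R^n_+$ via coordinate sign maps (using that $S_{j,p}$ depends only on absolute values of coordinates), application of Proposition~\ref{Sjp-BALL-orthants} to each of the $2^{nk}$ summands, and then the observation that the extremal integrand is unconditional so the factor $2^{nk}$ matches $(\int_{\R^n}\cdots)^k=(2^n\int_{\R^n_+}\cdots)^k$. The paper's write-up is slightly terser (it stops after bounding a single summand by $(\int_{\R^n_+}\cdots)^k$ and leaves the summation implicit), but the argument is the same.
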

\begin{proof}
Let $O_i$, $i=1,\ldots,2^n$ be an enumeration of all orthants. Then,
\begin{equation*}
\prod_{i=1}^k\int_{\R^n}|\langle x_i,e_m\rangle|^qf_i(x_i)\,dx_i=\sum_{l_1,\ldots,l_k=1}^{2^n}\prod_{i=1}^k\int_{O_{l_{i}}}|\langle x_{i},e_m\rangle|^qf_{{i}}(x_{i})\,dx_{i}.
\end{equation*}
Therefore, it suffices to prove that if $l_1,\ldots,l_k\in\{1,\ldots,2^n\}$, then
\begin{equation*}
\prod_{i=1}^k\int_{O_{l_i}}|\langle x_{i},e_m\rangle|^qf_{{i}}(x_{i})\,dx_{i}\leq \left(\int_{\R^n_+}|\langle u,e_m\rangle|^q\rho\left({{k}\choose{j}}\|u\|_{jp}^{jp}\right)^{\frac{1}{k}}\,du\right)^k.
\end{equation*}
Let $\tilde{f}_{{i}}:\R^n_+\to\R_+$ be the function defined by $\tilde{f}_{{i}}(|x(1)|,\ldots,|x(n)|)=f_{{i}}(x(1),\ldots,x(n))$, $i=1,\ldots,k$. Notice that, for any $x_{1},\ldots,x_{k}\in\R^n_+$ it holds
\begin{equation*}
\prod_{i=1}^k\tilde{f}_{{i}}(x_i)\leq \rho(S_{j,p}(x_1,\ldots,x_k))
\end{equation*}
and also
\begin{equation*}
\prod_{i=1}^k\int_{\R^n_+}|\langle x_{i},e_m\rangle|^q\tilde{f}_{{i}}(x_{i})\,dx_{i}=\prod_{i=1}^k\int_{O_{l_i}}|\langle x_{i},e_m\rangle|^qf_{{i}}(x_{i})\,dx_{i}.
\end{equation*}
The desired inequality follows by Proposition \ref{Sjp-BALL-orthants}.
\end{proof}
Setting $q=0$ to Corollary \ref{S_jpbcsd} and using \eqref{observation}, we obtain the following.
\begin{corollary}\label{S_jpbcsd1}
Let $p>0$ and $1\leq j\leq k$ be two integers, where $k\geq 2$. Then, for any integrable functions  $f_i:\R^n\to\R$, $i=1,\ldots,k$, satisfying $S_{j,p}$-polarity condition with respect to some decreasing function $\rho:\R\to[0,\infty]$,
it holds
\begin{equation*}
\prod_{i=1}^k\int_{\R^n}f_i(x_i)\,dx_i\leq\left(\int_{\R^n}\rho\left({{k}\choose{j}}\|u\|_{jp}^{jp}\right)^{\frac{1}{k}}\,du\right)^k.
\end{equation*}
Moreover, if $K_1,\ldots,K_k$ are any convex bodies in $\R^n$ satisfying $\mathcal{E}_{j,p}$-polarity condition, one has
\begin{equation*}
\prod_{i=1}^k|K_i|\leq |B^n_{jp}|^k.
\end{equation*}
\end{corollary}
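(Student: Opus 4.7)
The plan is to obtain both assertions as essentially immediate consequences of Corollary \ref{S_jpbcsd}, without any further technical work. The key observation is that the parameter $q$ in Proposition \ref{Sjp-BALL-orthants} (and hence in Corollary \ref{S_jpbcsd}) is only a weight on one coordinate; setting $q=0$ kills this weight and produces the plain volume/integral statement.

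For the functional inequality, I would apply Corollary \ref{S_jpbcsd} with $q=0$ and an arbitrary choice of $m$. The factor $|\langle x_i,e_m\rangle|^q$ collapses to $1$ on both sides, and the right-hand side becomes exactly
\[
\left(\int_{\R^n}\rho\!\left(\tbinom{k}{j}\|u\|_{jp}^{jp}\right)^{1/k}du\right)^{k},
\]
which is the desired bound.

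For the set inequality, I would mimic the reduction in \eqref{observation}. Given convex bodies $K_1,\ldots,K_k\subseteq\R^n$ satisfying $\mathcal{E}_{j,p}$-polarity, set $f_i:=1_{K_i}$ and define
\[
\rho(t):=\begin{cases}+\infty, & t<0,\\ 1_{[0,1]}\!\left(\tbinom{k}{j}^{-1}t\right), & t\geq 0.\end{cases}
\]
If $x_i\in K_i$ for every $i$, then $S_{j,p}(x_1,\ldots,x_k)\leq\binom{k}{j}$ by the $\mathcal{E}_{j,p}$-polarity hypothesis, so $\rho(S_{j,p}(x_1,\ldots,x_k))=1=\prod_i f_i(x_i)$; if some $x_i\notin K_i$, then $\prod_i f_i(x_i)=0$ and there is nothing to check. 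Hence $f_1,\ldots,f_k$ satisfy $S_{j,p}$-polarity with respect to this $\rho$, and applying the functional version just proved yields
\[
\prod_{i=1}^{k}|K_i|=\prod_{i=1}^{k}\int_{\R^n}f_i\leq\left(\int_{\R^n}1_{[0,1]}\!\left(\|u\|_{jp}^{jp}\right)^{1/k}du\right)^{k}=|B^n_{jp}|^{k}.
\]

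There is no real obstacle: all the work has already been done in Proposition \ref{Sjp-BALL-orthants} and Corollary \ref{S_jpbcsd}. The only points to be careful about are that $S_{j,p}\geq 0$ (so the branch $\rho(t)=+\infty$ for $t<0$ is irrelevant and serves only to match the format of \eqref{observation}), and that the proof of Corollary \ref{S_jpbcsd} indeed works for any $q\geq 0$, in particular $q=0$, so the orthant-decomposition step goes through verbatim.
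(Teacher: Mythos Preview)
Your proposal is correct and matches the paper's own proof, which is the single sentence ``Setting $q=0$ to Corollary \ref{S_jpbcsd} and using \eqref{observation}, we obtain the following.'' You have simply written out the details of those two steps explicitly, including the verification that $f_i=1_{K_i}$ together with the chosen $\rho$ satisfy $S_{j,p}$-polarity (your remark that $S_{j,p}\geq 0$ always is the reason the $t<0$ branch is never used).
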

\begin{remark}\label{tapa1}
Kolesnikov and Werner \cite[Proposition 5.5.]{KW} established a related result, stating  the following. If $K_1,\ldots,K_k$ are unconditional convex bodies satisfying  
\begin{equation}\label{KW-PAPARIA}
\prod_{i=1}^k{r_{K_i}}(x_i)\leq\left( \sum_{l=1}^n|x_1(l)|^{\frac{2}{k}}\cdots|x_k(l)|^{\frac{2}{k}} \right)^{-\frac{k}{2}},\qquad \forall x_i\in S^{n-1},  \ i=1,\ldots,k
\end{equation}
then $|K_1|\cdots|K_k|\leq |B^n_2|^k$, where  $r_{K_i}(u):=\sup\{t>0:tu\in K_i\}$, $u\in\R^n$, denotes the radial function of $K_i$. One can notice that, since $r_{K_i}(\cdot)=\|\cdot\|_{K_i}^{-1}$, \eqref{KW-PAPARIA} is equivalent to,
\begin{equation*}\label{S,k,2/k}
\sum_{l=1}^n \Big|\frac{x_1(l)}{\|x_1\|_{K_1}}\Big|^{\frac{2}{k}}\cdots\Big|\frac{x_k(l)}{\|x_k\|_{K_k}}\Big|^{\frac{2}{k}}\leq 1,\qquad \forall x_i\in \R^n,  \ i=1,\ldots,k,
\end{equation*}
which can be written as
\begin{equation*}\label{S,k,2/k1}
\sum_{l=1}^n |x_1(l)|^{\frac{2}{k}}\cdots|x_k(l)|^{\frac{2}{k}}\leq 1 \qquad \forall x_i\in K_i,  \ i=1,\ldots,k.
\end{equation*}
Thus, by Corollary \ref{S_jpbcsd1} for  $j=k$ and $p=2/k$, $|K_1|\cdots|K_k|\leq |B^n_2|^k$ holds for any convex bodies $K_1,\ldots,K_k$ (not necessarily symmetric) that satisfy condition \eqref{KW-PAPARIA}.
\end{remark}
\section{Symmetrization}\label{Steiner-symmetrization}
\hspace*{1.5em}This section is devoted to completing the proof of Theorem \ref{phisantalo}. Our proof based on a modification of a symmetrization technique used in \cite{M.P.} and on the following observation. If $j$ is even and $r_1,\ldots,r_k\in\R$, then
\begin{equation}\label{sdcsdcsdc}
s_j(r_1,-r_2,\ldots,-r_k)=s_j(-r_1,r_2,\ldots,r_k),
\end{equation}
while\begin{equation}\label{sdcsdcsdc1}
s_k(r_1,-r_2,r_3,\ldots,r_k)=s_k(-r_1,r_2,r_3,\ldots,r_k).
\end{equation}
The proof will follow easily from the next lemma.
\begin{lemma}\label{cknsldkcs}
Let $2\leq j\leq k$ and $K_1,\ldots,K_k$ symmetric convex bodies satisfying $\mathcal{E}_j$-polarity condition. Assume that one of the following holds
\begin{enumerate}[(i)]
\item $j=k$.
\item $j$ is even and $K_3,\ldots,K_k$ are unconditional.
\end{enumerate}
Then there exist  $U_1,\ldots,U_k$ unconditional convex bodies satisfying $\mathcal{E}_j$-polarity condition, such that
\begin{equation*}
\prod_{i=1}^k|K_i|\leq \prod_{i=1}^k|U_i|.
\end{equation*}
\end{lemma}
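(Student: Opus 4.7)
The plan is to argue by iteratively applying a Steiner-type symmetrization with respect to each coordinate hyperplane $e_m^\perp$, $m=1,\ldots,n$. At each step, the operation replaces the tuple $(K_1,\ldots,K_k)$ by a new tuple whose bodies are invariant under the reflection $R_m: x \mapsto x - 2\langle x, e_m\rangle e_m$, while retaining all previously acquired symmetries, preserving $\mathcal{E}_j$-polarity, and not decreasing the product of volumes. After $n$ such steps the resulting bodies are unconditional, and taking them to be $U_1,\ldots,U_k$ completes the proof.

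For the single-coordinate step I would first establish a \emph{reflection principle}: if $(K_1,K_2,K_3,\ldots,K_k)$ satisfies $\mathcal{E}_j$-polarity, where in case (ii) $K_3,\ldots,K_k$ are already $R_m$-symmetric (from the hypothesis, and from earlier iterations; in case (i) we arrange this inductively by treating pairs of bodies in turn), then so does $(R_m K_1, R_m K_2, K_3,\ldots,K_k)$. This follows immediately from \eqref{sdcsdcsdc} and \eqref{sdcsdcsdc1}: coordinates $l\neq m$ are unchanged, while at $l=m$ the two simultaneous sign-flips either cancel in $s_k$ by \eqref{sdcsdcsdc1} (case (i)), or, for $j$ even, are first upgraded via the global-sign-flip invariance (which is the content of \eqref{sdcsdcsdc}) to a reflection of \emph{all} $k$ arguments, after which the unconditionality of $K_3,\ldots,K_k$ absorbs the extra reflections on those coordinates.

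Given the reflection principle, I would set $\tilde K_i = \tfrac12(K_i + R_m K_i)$ (Minkowski sum) for $i=1,2$ and $\tilde K_i = K_i$ for $i\geq 3$. Each $\tilde K_i$ is then $R_m$-symmetric, and Brunn-Minkowski gives $|\tilde K_i|\geq |K_i|$, so the product of volumes does not decrease. To verify polarity for $(\tilde K_1,\tilde K_2,K_3,\ldots,K_k)$, I would expand $\mathcal{S}_j(y_1,y_2,x_3,\ldots,x_k)$ with $y_i=(z_i+R_m z_i')/2$ via the affinity of $\mathcal{S}_j$ in each of the arguments $y_1,y_2$, yielding a convex combination of four terms; the two ``diagonal'' terms $\mathcal{S}_j(z_1,z_2,x_3,\ldots)$ and $\mathcal{S}_j(R_m z_1', R_m z_2', x_3, \ldots)$ are immediately bounded by $\binom{k}{j}$ by the original and reflected polarities, respectively.

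\textbf{Main obstacle.} The technical heart of the proof is bounding the two remaining ``cross'' terms $\mathcal{S}_j(z_1, R_m z_2', x_3, \ldots, x_k)$ and $\mathcal{S}_j(R_m z_1', z_2, x_3, \ldots, x_k)$ by $\binom{k}{j}$ on average, since they involve a single reflection and are not covered directly. My plan is to handle them pairwise by exploiting the central symmetry $K_i = -K_i$: substitutions such as $z_1'\to -z_1'$ or $z_2\to -z_2$ invert certain terms in the decomposition $s_j(r_1,r_2,r_3,\ldots,r_k) = r_1r_2\, s_{j-2}(r_3,\ldots) + (r_1+r_2)\, s_{j-1}(r_3,\ldots) + s_j(r_3,\ldots)$ and, together with appropriate applications of the original and reflected polarities, should yield the desired sum bound. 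If this direct approach is insufficient, a fallback is to replace the Minkowski-sum construction by the standard Steiner symmetrization along $e_m$ (equivalent to a chord-wise recentering) and to adapt the slice-wise Brunn-Minkowski arguments of Meyer-Pajor directly, with the reflection principle and the combinatorial identities from \eqref{sdcsdcsdc}, \eqref{sdcsdcsdc1} controlling the polarity condition on corresponding chords.
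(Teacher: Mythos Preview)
Your reflection principle is correct and is indeed the combinatorial heart of the argument. However, the primary Minkowski-sum symmetrization you propose---replacing both $K_1$ and $K_2$ by $\tilde K_i=\tfrac12(K_i+R_mK_i)$---does not preserve $\mathcal{E}_j$-polarity in general, and the obstacle you flag at the cross terms is fatal rather than merely technical. Here is a concrete failure already for $j=k=2$, $n=2$. Take $K_1=\textnormal{conv}\{\pm(1,1),\pm(\delta,-\delta)\}$ for small $\delta>0$ and $K_2=K_1^\circ=\{(a,b):|a+b|\le1,\ |a-b|\le1/\delta\}$. With $z_1=(1,1)$, $z_1'=(-1,-1)\in K_1$ and $z_2=(\tfrac1{2\delta},-\tfrac1{2\delta})$, $z_2'=-z_2\in K_2$, one gets $y_1=\tfrac12(z_1+R_1z_1')=(1,0)\in\tilde K_1$ and $y_2=\tfrac12(z_2+R_1z_2')=(\tfrac1{2\delta},0)\in\tilde K_2$, so $\langle y_1,y_2\rangle=\tfrac1{2\delta}\gg1$. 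The two diagonal terms vanish while both cross terms equal $1/\delta$, and no substitution $z_i\mapsto-z_i$ helps: by central symmetry these substitutions merely permute the four terms. The structural reason is that enlarging $K_1$---which Minkowski averaging does strictly whenever $K_1\neq R_mK_1$---can only shrink its polar, so one cannot simultaneously enlarge $K_2$ and retain polarity.

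Your fallback is exactly what the paper does, and the asymmetry it introduces is essential. One Steiner-symmetrizes only $K_1$ (so $|st_{e_m^\perp}K_1|=|K_1|$) and replaces $K_2$ by the largest admissible body $K_2':=(st_{e_m^\perp}K_1,K_3,\dots,K_k)^\circ_j$, so that polarity holds by definition; the work is to show $|K_2'|\ge|K_2|$. This is done slice by slice: given $(\tilde x_2,r)\in K_2$ and $(\tilde x_2',-r)\in K_2$, the reflection identities \eqref{sdcsdcsdc}, \eqref{sdcsdcsdc1} convert the polarity inequality at $(\tilde x_1,r_1'),(\tilde x_2',-r),(\tilde x_3,r_3),\dots$ into one at $(\tilde x_1,-r_1'),(\tilde x_2',r),(\tilde x_3,r_3),\dots$; averaging this with the original inequality at $(\tilde x_1,r_1),(\tilde x_2,r),\dots$ (crucially with the \emph{same} $\tilde x_1$, so that the first argument lands in $st_{e_m^\perp}K_1$) yields $\tfrac12\big(K_2(r)+K_2(-r)\big)\subseteq K_2'(r)$, and slice-wise Brunn--Minkowski plus Fubini finish. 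Only two inequalities are averaged, so no cross terms appear---precisely because only one body is being symmetrized. For case~(i) one then iterates this pairwise over $(\bar K_2,K_3)$, $(\bar K_3,K_4)$, etc., as your outline anticipates.
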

\begin{proof}
For a set $A\subseteq\R^n$ and a number $r\in\R$, set
\begin{equation*}
A(r):=\{\tilde{x}\in e_n^{\perp}:(\tilde{x},r)\in A\}.
\end{equation*}
The Steiner symmetrization of a convex body $K$ with repsect to $e_n^{\perp}$ is given by
\begin{equation*}
st_{e_n^{\perp}}(K)=\Big\{\big(\tilde{x},\frac{r-r'}{2}\big)\in \R^n :\tilde{x}\in P_{e_n^{\perp}}(K), \   \text{and} \  (\tilde{x},r),(\tilde{x},r')\in K\Big\},
\end{equation*}where $P_{e_n^{\perp}}(K)$ denotes the orthogonal projection of $K$ onto the subspace $e_n^\perp$.

For symmetric convex bodies $K_1,K_3,\ldots,K_k$, set
\begin{equation*}\label{polar-K_2}
(K_1,K_3,\ldots,K_{k})^o_j:=\Big\{x_2\in\R^n:{\cal S}_j(x_1,x_2,\ldots,x_k)\leq {{k}\choose{j}}, \ \text{for all} \  x_i\in K_i  \ \text{with} \  i\neq 2\Big\}.
\end{equation*}
This is a just generalization of the notion of the polar set in the case $j=k=2$. Clearly, the set $(K_1,K_3,\ldots,K_{k})^o_j$ is a symmetric convex body and, furthermore, if the $K_i$ are all unconditional then $(K_1,K_3,\ldots,K_{k})^o_j$ is also unconditional.  Notice also that $(K_1,K_3,\ldots,K_{k})^o_j$ is the largest symmetric convex body, such that the sets   $K_1,(K_1,K_2,\ldots,K_{k})^o_j,K_3,\ldots,K_{k},$ satisfy ${\cal E}_j$-polarity condition.
We will prove both assertions of Lemma \ref{cknsldkcs} simultaneously by Steiner symmetrization. We may clearly assume that $K_2=(K_1,K_3,\ldots,K_k)^o_j$. We set $K_2'=(st_{{e_n}^{\perp}}K_1,K_3,\ldots,K_k)^o_j$. We will show that, for $r\geq0$, it holds  
\begin{equation}\label{inclusion}
\frac{K_2(r)+K_2(-r)}{2}\subseteq K_2'(r).
\end{equation}
Let $\tilde{x}_2\in K_2(r)$ and $\tilde{x}_2'\in K_2(-r)$. Then, for all $(\tilde{x}_i,r_i)\in K_i$, $i=3,\ldots,k,$ and for all $(\tilde{x}_1,r_1),(\tilde{x}_1,r_1')\in K_1$, it holds
\begin{equation}\label{cskjdncs}
{\cal S}_j((\tilde{x}_1,r_1),(\tilde{x}_2,r),(\tilde{x}_3,r_3),\ldots,(\tilde{x}_k,r_k))\leq {{k}\choose{j}}
\end{equation}
and
\begin{equation}\label{jndscs}
{\cal S}_j((\tilde{x}_1,r_1'),(\tilde{x}_2',-r),(\tilde{x}_3,r_3),\ldots,(\tilde{x}_k,r_k))\leq {{k}\choose{j}}.
\end{equation}
When $j$ is even and $K_3,\ldots,K_k$ are unconditional, \eqref{jndscs}  can be equivalently written as
\begin{equation}\label{jndscs112312}
{\cal S}_j((\tilde{x}_1,r_1'),(\tilde{x}_2',-r),(\tilde{x}_3,-r_3),\ldots,(\tilde{x}_k,-r_k))\leq {{k}\choose{j}}.
\end{equation}
Combining \eqref{sdcsdcsdc1}, \eqref{jndscs} and \eqref{sdcsdcsdc}, \eqref{jndscs112312}, we obtain (for both assertions of the Lemma)
\begin{equation}\label{jndscs11}
{\cal S}_j((\tilde{x}_1,-r_1'),(\tilde{x}_2',r),(\tilde{x}_3,r_3),\ldots,(\tilde{x}_k,r_k))\leq {{k}\choose{j}},
\end{equation}
for all $(\tilde{x}_i,r_i)\in K_i$, $i=3,\ldots,k$ and for all $(\tilde{x}_1,r_1')\in K_1$. Averaging \eqref{cskjdncs} and \eqref{jndscs11}, and since ${\cal S}_j$ is affine with respect to each argument, we conclude that
\begin{equation*}
{\cal S}_j\Big(\big(\tilde{x}_1,\frac{r_1-r_1'}{2}\big),\big(\frac{\tilde{x}_2+\tilde{x}_2'}{2},r\big),(\tilde{x}_3,r_3),\ldots,(\tilde{x}_k,r_k)\Big)\leq {{k}\choose{j}},
\end{equation*}
for all $(\tilde{x}_i,r_i)\in K_i$, $i=3,\ldots,k$ and for all $(\tilde{x}_1,r_1),(\tilde{x}_1,r_1')\in K_1$. This shows that $\frac{\tilde{x}_2+\tilde{x}_2'}{2}\in K_2'(r)$, which establishes \eqref{inclusion}.
Inclusion \eqref{inclusion} together with the Brunn-Minkowski inequality and Fubini's Theorem show
that \begin{equation*}
|K_1|=2\int_{0}^{\infty}|K_2(r)|\,dr\leq2\int_{0}^{\infty}|K_2'(r)|\,dr=|K_2'|.
\end{equation*}
Applying the same argument successively with respect to $e_{n-1},\ldots,e_1$, we arrive at an unconditional convex body $U_1$, such that $|U_1|=|K_1|$, the tuple $U_1,\bar{K}_2:=(U_1,K_3,\ldots,K_k)^o_j,K_3,\ldots,K_k$ satisfies ${\cal E}_j$-polarity condition and $|K_2|\leq|\bar{K}_2|$. This can be done for both cases (i) and (ii) of the lemma. 

Recall that if $K_3,\ldots,K_k$ are unconditional then $\bar{K}_2$ is also unconditional and the proof of (i) is complete. 

In the case $j=k$, we repeat the same argument to the new tuple $(U_1,\bar{K}_2,K_3,\ldots,K_k)$ with respect to the pair $(\bar{K}_2, K_3)$. Thus, we are able to replace $\bar{K}_2$ by an unconditional convex body $U_2$ and $K_3$ by a symmetric convex body $\bar{K}_3$, such that $|U_2|=|\bar{K}_2|$, $|\bar{K}_3|\geq |K_3|$, while the tuple $(U_1,U_2,\bar{K}_3,K_4,\ldots,K_k)$ also satisfies ${\cal E}_j$-polarity condition. We continue the same process until we replace all $K_1,\ldots,K_{k-1}$ by unconditional convex bodies $U_1,\ldots,U_{k-1}$ without decreasing the volume product of the $K_i$'s. We conclude the proof by the fact that $U_k:=(U_1,\ldots,U_{k-1})^\circ_k$ is also unconditional.
\end{proof}
\begin{proof}[Proof of Theorem \ref{phisantalo}]
Inequality \eqref{santalo-for-many} in all cases follows from Lemma \ref{cknsldkcs} together with Corollary \ref{main.results.for.unco.}.
It remains to verify that  \eqref{santalo-for-many} is sharp for $l_j$-balls. In other words we need to prove that if  $K_1=\ldots=K_k=B^n_j$, then $K_1,\ldots,K_k$ satisfy $\mathcal{E}_j$-polarity condition . But this is a simple application of the arithmetic-geometric mean inequality: For any $x_1,\ldots,x_k\in B_j^n$, it holds
\begin{eqnarray*}
{\cal S}_j(x_1,\ldots,x_k)&=&\sum_{l=1}^n\sum_{1\leq i_1<\ldots<i_j\leq k}x_{i_1}(l)\cdots x_{i_j}(l)\\
&\leq& \frac{1}{j}\sum_{l=1}^n\sum_{1\leq i_1<\ldots<i_j\leq k}\big(|x_{i_1}(l)|^j+\ldots+|x_{i_j}(l)|^j\big)\\
&=&\frac{(k-1)!}{(k-j)!j!}\sum_{l=1}^n\big(|x_1(l)|^j+\ldots+|x_k(l)|^j\big)\\
&=&\frac{(k-1)!}{(k-j)!j!}\sum_{i=1}^k\|x_i\|_j^j\leq \frac{(k-1)!}{(k-j)!j!}k={{k}\choose{j}}.
\end{eqnarray*}
\end{proof}
\begin{remark}\label{boundremark}
	The case $j=k$ of Theorem \ref{phisantalo} implies that for any $2\leq j\leq k$ there exists a positive constant $a_{n,j,k}$ that depends only on $n,j,k$, such that for any symmetric convex bodies $K_1,\ldots,K_k$ in $\R^n$, satisfying $\mathcal{E}_j$-polarity condition, it holds
	\begin{equation}\label{boundjj}
	\prod_{i=1}^k|K_i|\leq a_{n,j,k}|B^n_j|^k,
	\end{equation} 
	where one can take $a_{n,j,k}={{k}\choose{j}}^{nk/j}$.
\end{remark}
To see this, notice that $S_j(x_1,\ldots,x_j,0,\ldots,0)\leq {{k}\choose{j}}$, for any $x_i\in K_i$, $i=1,\ldots,j$. This implies easily that the tuple ${{k}\choose{j}}^{-1/j}K_1,\ldots,{{k}\choose{j}}^{-1/j}K_j$ satisfies $\mathcal{E}_j$-polarity condition, thus by Theorem \ref{phisantalo} (ii) one has 
\begin{equation*}
\prod_{i=1}^j|K_i|\leq {{k}\choose{j}}^{n}|B^n_j|^j.
\end{equation*}
The same argument shows that a similar inequality holds if $K_1,\ldots,K_j$ are replaced by $K_{i_1},\ldots,K_{i_j}$, with $1\leq i_1<\ldots<i_j\leq k$. Multiplying all these inequalities together, yields \eqref{boundjj}.

\section{Equivalence between Conjectures \ref{S_j Santalo for bodies} and \ref{jvnljfnv}}\label{Bodies.iff.functions}
\hspace*{1.5em}In this section we prove cases (ii) and (iii) of Theorem \ref{KWconj}. This is done by establishing the equivalence between Conjectures \ref{S_j Santalo for bodies} and \ref{jvnljfnv} (actually, a slightly more general result), mentioned in the Introduction. Let us first introduce some notation. Let $G$ be a subgroup of the orthogonal group $O(n)$ in $\R^n$. We set
\begin{equation*}
\mathcal{S}(G):=\{S\subseteq\R^n:gS=S, \ \forall  \ g\in G\} \qquad \textnormal{and}\qquad \mathcal{F}(G):=\{f:\R^n\to \R:f\circ g=f, \ \forall  \ g\in G\}.
\end{equation*}
\begin{proposition}\label{vjndsf}
Let $\mu$ be an $a$-homogeneous Borel measure in $\R^n$ for some $a>0$, $k$ be a positive integer, $j\in\{2,\ldots,k\}$ and $G_1,\ldots,G_k$ be subgroups of $O(n)$. The following statements are equivalent.
\begin{enumerate}[i)]
    \item For any $k$-tuple of symmetric convex bodies $(K_1,\dots,K_k)\in\mathcal{S}(G_1)\times\cdots\times \mathcal{S}(G_k)$, satisfying ${\cal E}_j$-polarity condition, it holds
    \begin{equation*}\label{eq-prop-equiv-1'}
    \prod_{i=1}^k \mu(K_i)\leq \mu(B_j^n)^k.    
    \end{equation*}
    \item For any $k$-tuple of even non-negative measurable functions $(f_1,\ldots,f_k)\in\mathcal{F}(G_1)\times\cdots\times \mathcal{F}(G_k)$, satisfying ${\cal S}_j$-polarity condition with respect to some decreasing function $\rho$, it holds
    \begin{equation}\label{eq-prop-equiv-2'}
    \prod_{i=1}^k\int_{\R^n}f_i(x_i)\, d\mu(x_i)\leq \left(\int_{\R^n}\rho\left({{k}\choose{j}}\|u\|_j^j\right)^{1/k}\, d\mu(u)\right)^k.    
    \end{equation}
\end{enumerate}
\end{proposition}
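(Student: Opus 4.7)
The direction (ii) $\Rightarrow$ (i) is immediate: taking $f_i:=1_{K_i}$ together with the step function $\rho$ from \eqref{observation}, these are even, $G_i$-invariant, and satisfy the $\mathcal{S}_j$-polarity condition with respect to $\rho$, so the functional inequality of (ii) collapses to $\prod_i\mu(K_i)\le\mu(B_j^n)^k$.

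For (i) $\Rightarrow$ (ii), my plan is to combine the set inequality with the one-dimensional multiplicative Pr\'ekopa--Leindler inequality (Theorem \ref{Prekopa-Leindlre}). Given $f_1,\ldots,f_k$ as in the hypothesis of (ii), set $K_i(t):=\{f_i\ge t\}$, which is even and $G_i$-invariant. If $x_i\in K_i(t_i)$, then $\prod_i t_i\le\prod_i f_i(x_i)\le\rho(\mathcal{S}_j(x_1,\ldots,x_k))$, so $\mathcal{S}_j(x_1,\ldots,x_k)\le\rho^{-1}(\prod_i t_i)$, where $\rho^{-1}(\sigma):=\sup\{r:\rho(r)\ge\sigma\}$. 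Since $\mathcal{S}_j$ is multi-affine, this bound is preserved when each $K_i(t_i)$ is replaced by its convex hull $\widetilde{K}_i(t_i):=\mathrm{conv}(K_i(t_i))$, which remains symmetric and $G_i$-invariant. After truncating by $RB_2^n$ and letting $R\to\infty$ (and discarding the trivial case $\mu(K_i(t_i))=0$), rescaling each $\widetilde{K}_i(t_i)$ by $\lambda:=\bigl(\binom{k}{j}/\rho^{-1}(\prod_i t_i)\bigr)^{1/j}$ produces a $k$-tuple of symmetric, $G_i$-invariant convex bodies satisfying the $\mathcal{E}_j$-polarity condition. Applying (i) and the $a$-homogeneity of $\mu$ then gives
\[
\prod_{i=1}^k\mu(K_i(t_i))\le\mu(B_j^n)^k\left(\frac{\rho^{-1}(\prod_i t_i)}{\binom{k}{j}}\right)^{ak/j}.
\]

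The decisive step is then to pass to the one-dimensional decreasing rearrangements $g_i(s):=\sup\{t>0:\mu(K_i(t))\ge s\}$, which satisfy $\int_{\R_+}g_i(s)\,ds=\int_{\R^n}f_i\,d\mu$ by the layer-cake identity. Substituting $t_i=g_i(s_i)$ in the displayed bound and inverting the decreasing function $\rho$ converts the set bound into the purely one-dimensional multiplicative polarity
\[
\prod_{i=1}^k g_i(s_i)\le\widetilde\rho\Bigl(\prod_{i=1}^k s_i\Bigr),\qquad \widetilde\rho(\sigma):=\rho\!\left(\binom{k}{j}\Bigl(\tfrac{\sigma}{\mu(B_j^n)^k}\Bigr)^{j/(ak)}\right),
\]
with $\widetilde\rho$ still decreasing. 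Setting $h_i:=g_i$ and $h(\tau):=\widetilde\rho(\tau^k)^{1/k}$, the identity $h\bigl(\prod_i s_i^{1/k}\bigr)=\widetilde\rho(\prod_i s_i)^{1/k}$ verifies the hypothesis of Theorem \ref{Prekopa-Leindlre}, whose conclusion yields $\prod_i\int_{\R^n}f_i\,d\mu\le\bigl(\int_0^\infty h(\tau)\,d\tau\bigr)^k$.

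The main obstacle is the final identification of $\bigl(\int_0^\infty h(\tau)\,d\tau\bigr)^k$ with the right-hand side of \eqref{eq-prop-equiv-2'}; this is where the $a$-homogeneity of $\mu$ enters crucially. Writing $\mu$ in $l_j$-polar coordinates as $d\mu=r^{a-1}\,dr\,d\nu(\theta)$ on $\{r\theta:r>0,\ \|\theta\|_j=1\}$ with $\nu$ of total mass $a\mu(B_j^n)$, the substitution $v=r^j$ reduces the $n$-dimensional integral to $\tfrac{a\mu(B_j^n)}{j}\int_0^\infty\rho(\binom{k}{j}v)^{1/k}v^{a/j-1}\,dv$, while the substitution $\tau=\mu(B_j^n)v^{a/j}$ reduces $\int_0^\infty h(\tau)\,d\tau$ to exactly the same quantity, closing the argument. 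The remaining technicalities — a right-continuity convention for $\rho^{-1}$, monotone approximation of $\widetilde{K}_i(t_i)$ by convex bodies with non-empty interior, and the possibility of unbounded level sets — are routine.
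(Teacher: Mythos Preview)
Your proof is correct and follows essentially the same strategy as the paper's: level sets, convex hull via Lemma~\ref{kjsdncs}, rescaling by the $j$-th root to force the $\mathcal{E}_j$-polarity condition, then the one-dimensional multiplicative Pr\'ekopa--Leindler inequality and a final identification of the right-hand side. The paper's version is marginally more direct in that it applies Theorem~\ref{Prekopa-Leindlre} straight to the functions $\phi_i(r)=\mu(\{f_i\ge r\})$ with dominating function $\phi(r)=\binom{k}{j}^{-a/j}\mu(B_j^n)\,\rho^{-1}(r^k)^{a/j}$ (so no passage to decreasing rearrangements $g_i$ and no double inversion of $\rho$ is needed), and identifies the right-hand side via a second layer-cake computation rather than $l_j$-polar coordinates.
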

The fact that Conjectures \ref{S_j Santalo for bodies} and \ref{jvnljfnv} are equivalent follows immediately from Proposition \ref{vjndsf}, if we take $\mu$ to be the Lebesgue measure. For the proof we will need the following lemma (which is well known in the classical case $j=k=2$).
\begin{lemma}\label{kjsdncs}
Let $1\leq j\leq k$ and $A_1,\ldots,A_k$ be subsets of $\R^n$. If $A_1,\ldots,A_k$ satisfy $\mathcal{E}_j$-polarity condition, then $\textnormal{conv} (A_1),\ldots,\textnormal{conv} (A_k)$ also satisfy $\mathcal{E}_j$-polarity condition.
\end{lemma}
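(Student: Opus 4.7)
The plan is to exploit the fact that $\mathcal{S}_j$ (and hence $\mathcal{E}_j$) is \emph{separately affine}, i.e.\ linear in each argument when the others are held fixed. This is because the elementary symmetric polynomial $s_j(r_1,\ldots,r_k)$ is a sum of square-free monomials, so each variable $r_i$ appears at most to the first power; summing coordinate-wise against a fixed orthonormal basis preserves this property, so for any fixed $x_2,\ldots,x_k\in\R^n$ the map $y\mapsto \mathcal{S}_j(y,x_2,\ldots,x_k)$ is an $\R$-linear functional on $\R^n$.

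Given this, I would argue by swapping in convex hulls one coordinate at a time. Fix arbitrary $x_2\in A_2,\ldots,x_k\in A_k$ and consider the sublevel set
\begin{equation*}
H_1(x_2,\ldots,x_k):=\Bigl\{y\in\R^n:\mathcal{S}_j(y,x_2,\ldots,x_k)\leq \tbinom{k}{j}\Bigr\}.
\end{equation*}
By the affine observation above, $H_1(x_2,\ldots,x_k)$ is either $\R^n$ or a closed half-space; in either case it is convex. The hypothesis says $A_1\subseteq H_1(x_2,\ldots,x_k)$, so taking convex hulls yields $\textnormal{conv}(A_1)\subseteq H_1(x_2,\ldots,x_k)$. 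Since $x_2,\ldots,x_k$ were arbitrary in $A_2,\ldots,A_k$, the tuple $\textnormal{conv}(A_1),A_2,\ldots,A_k$ satisfies the $\mathcal{E}_j$-polarity condition.

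Now iterate: replace the role of the first slot by the $i$-th slot for $i=2,3,\ldots,k$ in turn. At step $i$, one has already shown that $\textnormal{conv}(A_1),\ldots,\textnormal{conv}(A_{i-1}),A_i,\ldots,A_k$ satisfies the polarity condition; fixing any $x_1\in\textnormal{conv}(A_1),\ldots,x_{i-1}\in\textnormal{conv}(A_{i-1}),x_{i+1}\in A_{i+1},\ldots,x_k\in A_k$, the map $y\mapsto \mathcal{S}_j(x_1,\ldots,x_{i-1},y,x_{i+1},\ldots,x_k)$ is again a linear functional of $y$, so its sublevel set at height $\binom{k}{j}$ is convex, contains $A_i$, and therefore contains $\textnormal{conv}(A_i)$. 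After $k$ steps we reach $\textnormal{conv}(A_1),\ldots,\textnormal{conv}(A_k)$, as desired.

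There is essentially no obstacle here beyond correctly identifying the separate-affineness of $\mathcal{S}_j$; the argument is exactly the standard fact that a linear inequality that holds pointwise on a set automatically holds on its convex hull, applied one coordinate at a time.
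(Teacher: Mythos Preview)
Your proposal is correct and is essentially the paper's own argument: both rely on the separate affineness of $\mathcal{E}_j$ (equivalently $\mathcal{S}_j$) to replace one $A_i$ at a time by its convex hull, the paper writing out the identity $\mathcal{E}_j\bigl(\sum_m \lambda_m y_m, x_2,\ldots,x_k\bigr)=\sum_m \lambda_m \mathcal{E}_j(y_m,x_2,\ldots,x_k)$ while you phrase it via convexity of the sublevel set. One small correction: for $j<k$ the map $y\mapsto \mathcal{S}_j(y,x_2,\ldots,x_k)$ is \emph{affine} rather than linear (terms of $s_j$ not involving the first variable contribute a constant), but this does not affect your argument, since the sublevel set of an affine function is still convex.
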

\begin{proof}
Clearly, it suffices to prove that $\textnormal{conv} (A_1),A_2,\ldots,A_k$ satisfy $\mathcal{E}_j$-polarity condition. This follows from the observation that, if  $\lambda
_1,\ldots,\lambda_r\geq 0$ are real numbers that sum to 1 and if $x_2,\ldots,x_k\in\R^n$ and  $y_1,\ldots,y_r\in\R^n$, then
\begin{equation*}
\mathcal{E}_j\Big(\sum_{m=1}^r\lambda_my_m,x_2,\ldots,x_k\Big)=\sum_{m=1}^r\lambda_m\mathcal{E}_j(y_m,x_2,\ldots,x_k).
\end{equation*}
\end{proof}
\begin{proof}[Proof of Proposition \ref{vjndsf}]
The fact  that (ii) implies (i), follows immediately from \eqref{observation}.

For the other direction, assume that (i) holds for all bodies $K_i\in \mathcal{S}(G_i)$, $i=1,\ldots,k$. Let $(f_1,\ldots,f_k)\in\mathcal{F}(G_1)\times\cdots\times \mathcal{F}(G_k)$ be functions that satisfy ${\cal S}_j$-polarity condition with respect to some $\rho$. In order to prove the desired inequality \eqref{eq-prop-equiv-2'}, (by an approximation argument) we can assume that $\lim_{t\to\infty}\rho(t)=0$, $\rho$ is continuous, strictly decreasing and that $\lim_{t\to 0^+}\rho(t)=\infty$. Define the (not necessarily convex) sets $K_i(r_i):=\{x_i\in\R^n:f_i(x_i)\geq r_i\}$, $r_i\geq 0$ and notice that $K_i(r_i)\in\mathcal{S}(G_i)$, $i=1,\ldots,k$. From ${\cal S}_j$-polarity condition one obtains that, for $x_i\in K_i(r_i)$, $i=1,\ldots,k$, it holds
$$r_1\ldots r_k\leq \prod_{i=1}^kf_i(x_i)\leq \rho\left({\cal S}_j(x_1,\ldots,x_k)\right).$$
Moreover, using the the strict monotonicity of $\rho$, we get
$${\cal S}_j(x_1,\ldots,x_k)\leq \rho^{-1}(r_1\ldots r_k).$$
Consequently, by the fact that ${\cal S}_j$ is homogeneous of order $j$, setting $\lambda:={{k}\choose{j}}^{\frac{1}{j}}\rho^{-1}(r_1\cdots r_k)^{-\frac{1}{j}},$ we conclude
$${\cal S}_j(\lambda x_1,\ldots,\lambda x_k)\leq {{k}\choose{j}}.$$Thus, by the assumption that (i) holds true and  by Lemma \ref{kjsdncs} we obtain
\begin{equation}\label{sdcskdcs}
\mu(\lambda K_1(r_1))\ldots \mu(\lambda K_k(r_k))\leq \mu(\textnormal{conv}(\lambda K_1(r_1)))\ldots \mu(\textnormal{conv}(\lambda K_k(r_k)))\leq \mu(B^n_j)^k.
\end{equation}
Equivalently, using the homogeneity of $\mu$, one has
$$\mu(K_1(r_1))\ldots \mu(K_k(r_k))\leq \frac{\mu(B^n_j)^k}{\lambda^{ka}}={{k}\choose{j}}^{-\frac{ka}{j}}\mu(B^n_j)^k\rho^{-1}(r_1\cdots r_k)^{\frac{ka}{j}}.$$
Set  $\phi_i(r_i):=\mu(K_i(r_i))$ , $r_i\geq 0$, $i=1,\ldots,k$ and $\phi(r):={{k}\choose{j}}^{-\frac{a}{j}}\mu(B^n_j)\rho^{-1}(r^k)^{\frac{a}{j}}$, $r\geq 0$. Then, the previous inequality can be written as
$$(\varphi_1(r_1)\ldots\varphi_k(r_k))^{1/k}\leq \varphi\left((r_1\ldots r_k)^{1/k}\right)$$
and, therefore, the Prekopa-Leindler inequality (Theorem \ref{Prekopa-Leindlre}) together with the Layer-Cake formula give
\begin{equation}\label{eq-prop-C2->C1-1}
\prod_{i=1}^k\int_{\R^n} f_i(x_i)\,d\mu(x_i)=\prod_{i=1}^k\int_0^{\infty} \varphi_i(r_i)\,dr_i\leq\left(\int_0^{\infty} \varphi\right)^k={{k}\choose{j}}^{-\frac{ka}{j}}\mu(B^n_j)^k\left(\int_0^{\infty}\rho^{-1}(r^k)^{\frac{a}{j}}\, dr\right)^k.
\end{equation}
On the other hand, using the extra assumptions on $\rho$ and the homogeneity of $\mu$, we see that
\begin{eqnarray}\label{eq-prop-C2->C1-3}
\int_{\mathbb{R}^n}\rho\left({{k}\choose{j}}\|u\|^j_j\right)^{1/k}\, d\mu(u)&=&\int_0^{\infty}\mu\left(\Big\{u:\rho\left({{k}\choose{j}}\|u\|_j^j\right)\geq t^k\Big\}\right)\,dt\nonumber\\
&=&\int_0^{\infty}\mu\left(\Big\{ u: \|u\|_j\leq \left({{k}\choose{j}}^{-1} \rho^{-1}(t^k)\right)^{\frac{1}{j}}\Big\}\right)\,dt\nonumber\\
&=&{{k}\choose{j}}^{-\frac{
a}{j}}\mu(B^n_j)\int_0^{\infty}\rho^{-1}(t^k)^{\frac{a}{j}}\,dt.
\end{eqnarray}
Putting together \eqref{eq-prop-C2->C1-1} and \eqref{eq-prop-C2->C1-3}, we arrive at \eqref{eq-prop-equiv-2'},
as claimed.
\end{proof}
The proof of Theorem \ref{KWconj} follows immediately from Theorem \ref{phisantalo} and Proposition \ref{vjndsf}.

\begin{remark}\label{boundremark2}
	Let $a_{n,j,k}$ be the positive constant defined in Remark \ref{boundremark}. Replacing $\mu(B^n_j)^k$ in \eqref{sdcskdcs} by $a_{n,j,k}\mu(B^n_j)^k$ and taking $\mu$ to be the Lebesgue measure, the proof of Proposition \ref{vjndsf} gives 
	\begin{equation*}
	\prod_{i=1}^k\int_{\R^n}f_i(x_i)\, dx_i\leq a_{n,j,k}\left(\int_{\R^n}\rho\left({{k}\choose{j}}\|u\|_j^j\right)^{1/k}\, du\right)^k,
	\end{equation*}
	for any even $f_1,\ldots,f_k$ satisfying $\mathcal{S}_j$-polarity condition with respect to some decreasing function $\rho$.
\end{remark}

\section{Ball's functional for many sets}\label{multi-Ball-section}
\hspace*{1.5em}Let us recall the definition of Ball's functional, mentioned in the Introduction. If $K$ is a symmetric convex body in $\R^n$, $B(K)$ is given by
$$B(K):=\int_{K}\int_{K^o}\langle x,y \rangle^2\, dx\, dy.$$  
It can be easily checked that $B(\cdot)$ is invariant under non-singular linear maps.
The primary goal of this section is to state and discuss a natural (at least in our opinion) extension of Conjecture \ref{Ball-conjec}, to the multi-entry setting. Let $\mathcal{D}(n)$ be the set of all orthonormal basis' in $\R^n$. For $k\geq 2$, $j\in\{2,\ldots,k\}$ and $\{\epsilon_m\}\in\mathcal{D}(n)$, define
\begin{equation*}
\mathcal{B}_j(K_1,\ldots,K_k,\{\epsilon_m\}):=\sum_{m=1}^n\prod_{i=1}^k\int_{K_i}|\langle x_i,\epsilon_m\rangle|^j\,dx_i.
\end{equation*}
Define, also
\begin{equation*}\label{def-B}
\mathcal{B}_j(K_1,\ldots,K_k):=\min_{\{\epsilon_m\}\in\mathcal{D}(n)}\mathcal{B}_j(K_1,\ldots,K_k,\{\epsilon_m\}).
\end{equation*}
One might dare to conjecture the following. Let $K_1,\ldots,K_k$ be symmetric convex bodies in $\R^n$ satisfying $\mathcal{E}_j$-polarity condition, $j\geq 2$.
Then,
\begin{equation}\label{multiBALLineq111}\mathcal{B}_j(K_1,\ldots,K_k)\leq \mathcal{B}_j(B^n_j,\ldots,B^n_j).
\end{equation}
It should be mentioned (this follows immediately from Lemma \ref{l-after-prop-connection} below) that 
\begin{equation}\label{eq-er-1}
\mathcal{B}_j(B^n_j,\ldots,B^n_j)=\mathcal{B}_j(B^n_j,\ldots,B^n_j,\{e_m\})=n\left(\int_{B^n_j}|\langle x,e_1\rangle|^j\, dx\right)^k.\end{equation}

Let us demonstrate that the conjectured inequality \eqref{multiBALLineq111} for $k=j=2$ agrees with Ball's Conjecture \ref{Ball-conjec}. First it will be useful to recall the notion of isotropicity. A symmetric convex body $K$ in $\R^n$ is called isotropic if 
$$\int_K\langle x,u\rangle^2\, dx=\frac{\|u\|_2^2}{n}\int_K\|x\|_2^2\, dx,\qquad \forall u\in\R^n.$$
Notice (see \cite{Mi-Pa}) that there is always a linear image $TK$ of $K$, such that $TK$ is isotropic.
Next, assume that conjecture \ref{Ball-conjec} is true. Observe that there always exists an orthonormal basis $\{\epsilon_m\}$ such that, for $i\neq m$, it holds
\begin{equation*}
\int_{K_1}\langle x,\epsilon_i\rangle\langle x,\epsilon_m\rangle\,dx=0.
\end{equation*}
Hence,
\begin{eqnarray*}
\mathcal{B}_2(K_1,K_2)\leq\mathcal{B}_2(K_1,K_2,\{\epsilon_m\})
&\leq&\mathcal{B}_2(K_1,K_1^o,\{\epsilon_m\})\\
&=&\sum_{m=1}^n\int_{K_1}\langle x,\epsilon_m\rangle^2\,dx\int_{K_1^o}\langle y,\epsilon_m\rangle^2\,dy\\
&=&\int_{K_1}\int_{K_1^o}\langle x,y\rangle^2\,dx\,dy\\
&\leq&\int_{B^n_2}\int_{B^n_2}\langle x,y\rangle^2\,dx\,dy=\mathcal{B}_2(B^n_2,B^n_2).
\end{eqnarray*}
Conversely, assume that \eqref{multiBALLineq111} is true for $k=j=2$ and for all symmetric convex bodies $K_1, K_2$.  One can take $K_1=K=K_2^o$. Since $B(K)$ is invariant under non-singular linear maps, we can assume that $K$ is isotropic. We have
\begin{eqnarray*}
B(B^n_2)=\mathcal{B}_2(B^n_2,B^n_2)\geq\mathcal{B}_2(K,K^o)&=&\min_{\{\epsilon_m\}\in\mathcal{D}(n)}\sum_{m=1}^n\int_{K}\langle x,\epsilon_m\rangle^2\,dx\int_{K^o}\langle y,\epsilon_m\rangle^2\,dy\\
&=&\min_{\{\epsilon_m\}\in\mathcal{D}(n)}\sum_{m=1}^n\int_{K}\langle x,\epsilon_1\rangle^2\,dx\int_{K^o}\langle y,\epsilon_m\rangle^2\,dy\\
&=&\int_K\langle x,\epsilon_1\rangle^2\,dx\int_{K^o}\|y\|_2^2\,dx\\
&=&\frac{1}{n}\int_K\|x\|_2^2\,dx\int_{K^o}\|y\|_2^2\,dy\\
&=&\int_{K}\int_{K^o}\langle x,y\rangle^2\,dx\,dy=B(K).
\end{eqnarray*}

Next, we would like to explain the connection between the conjectured inequality \eqref{multiBALLineq111} and the $j$-Santal\'o conjecture \ref{S_j Santalo for bodies}.
\begin{proposition}\label{jfvnbdf}
Let $k\geq 2$ be a positive integer, $j\in\{2,\ldots,k\}$ and $K_1,\ldots,K_k$ be symmetric convex bodies satisfying $\mathcal{E}_j$-polarity condition. If \eqref{multiBALLineq111} holds, then \eqref{santalo-for-many} also holds.
\end{proposition}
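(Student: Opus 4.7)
The plan is to combine the assumed inequality \eqref{multiBALLineq111} with an AM--GM step and an auxiliary single-body Santal\'{o}-type estimate. By \eqref{eq-er-1} together with the routine computation $\int_{B^n_j}|\langle x,e_1\rangle|^j\,dx=|B^n_j|/(n+j)$ (from the symmetries of $B^n_j$ and the scaling identity $\int_{B^n_j}\|x\|_j^j\,dx=n|B^n_j|/(n+j)$), the hypothesis reads $\mathcal{B}_j(K_1,\ldots,K_k)\le n|B^n_j|^k/(n+j)^k$. Letting $\{\epsilon_m^*\}$ be a basis realizing the minimum in $\mathcal{B}_j(K_1,\ldots,K_k)$ and applying the arithmetic--geometric mean inequality to the $n$ nonnegative summands in that minimum, one obtains
\[
\prod_{i=1}^k \prod_{m=1}^n \int_{K_i}|\langle x_i,\epsilon_m^*\rangle|^j\,dx_i\;\le\;\frac{|B^n_j|^{nk}}{(n+j)^{nk}}.
\]

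The heart of the argument will be the following auxiliary estimate, call it $(\star)$: for every symmetric convex body $K\subset\R^n$ and every orthonormal basis $\{\epsilon_m\}$,
\[
\prod_{m=1}^n\int_K|\langle x,\epsilon_m\rangle|^j\,dx\;\ge\;\frac{|K|^{n+j}}{(n+j)^n|B^n_j|^j}.
\]
Inequality $(\star)$ asserts that, at fixed volume, $B^n_j$ and its coordinate rescalings (for which a direct computation gives equality) minimize the geometric mean of the $j$-th directional moments along any orthonormal frame. Granted $(\star)$, applying it to each $K_i$ with the basis $\{\epsilon_m^*\}$ and combining with the display above yields $\prod_i|K_i|^{n+j}\le|B^n_j|^{(n+j)k}$, hence the desired $\prod_i|K_i|\le|B^n_j|^k$.

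To prove $(\star)$, I would first reduce to $\epsilon_m=e_m$ by an orthogonal change of variables, then reduce further to unconditional $K$ via successive Steiner symmetrizations in the hyperplanes $e_m^\perp$: each such symmetrization preserves $|K|$ and each integral $\int_K|x_l|^j\,dx$ for $l\ne m$, and can only decrease $\int_K|x_m|^j\,dx$ (among intervals of given length, the one centered at $0$ minimizes $\int|t|^j\,dt$); so the LHS of $(\star)$ only decreases while the RHS is unchanged, and the inequality for unconditional $K$ implies it in general. For unconditional $K$, I would argue by induction on $n$ using the $1$-dimensional multiplicative Pr\'ekopa-Leindler inequality (Theorem \ref{Prekopa-Leindlre}), in the style of the proof of Proposition \ref{Sjp-BALL-orthants}. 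The main obstacle is executing this Pr\'ekopa-Leindler induction with weights chosen so that the ellipsoidal $l_j^n$-balls remain the equality profile at each step.
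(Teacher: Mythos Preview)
Your overall strategy is correct and coincides with the paper's: the combination ``assumed bound on $\mathcal{B}_j$ + AM--GM over the $n$ summands + single-body moment lower bound'' is exactly the content of Lemma~\ref{l-after-prop-connection}, and your inequality~$(\star)$ is precisely \eqref{eq-ineq-final-lemma} with the same constant $c_{n,j}=1/((n+j)|B_j^n|^{j/n})$.

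The divergence is in how you propose to prove $(\star)$. Your Steiner reduction to unconditional bodies is sound (centering an interval minimizes $\int|t|^j\,dt$ over translates, and the other coordinate moments are preserved by Fubini), but you rightly flag the subsequent Pr\'ekopa--Leindler induction as the ``main obstacle,'' and indeed it is not clear how to run it: $(\star)$ is a statement about a \emph{single} body, so there is no natural $k$-tuple of functions to feed into Theorem~\ref{Prekopa-Leindlre}. The paper bypasses both the Steiner step and any induction with a short direct argument valid for arbitrary (not even symmetric) convex bodies $Q$: the product $\prod_m\int_Q|\langle x,e_m\rangle|^j\,dx$ is invariant under diagonal $SL(n)$ maps, so one may first apply such a map $T_0$ making all $n$ integrals equal; then the geometric mean equals the arithmetic mean $\tfrac{1}{n}\int_{T_0Q}\|x\|_j^j\,dx$; finally, the layer-cake formula and the trivial bound $|(T_0Q)\cap t^{1/j}B_j^n|\le\min(|Q|,t^{n/j}|B_j^n|)$ show that $\int\|x\|_j^j$ is minimized, at fixed volume, by the homothetic copy of $B_j^n$. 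This closes the gap in your sketch with no further machinery.
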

Proposition \ref{jfvnbdf} follows immediately from the following lemma (the corresponding fact involving $B(\cdot)$ was obtained by Ball \cite{Ball-f-Santalo} \cite{Ball-conjecture}; see also Lutwak \cite{Lu}).
\begin{lemma}\label{l-after-prop-connection}
For convex bodies $K_i$, $i=1,\ldots,k$ we have
\begin{equation}\label{jnjndsf}
\frac{{\cal B}_j(B_j^n,\dots,B_j^n,\{e_m\})}{|B_j^n|^{\frac{k(n+j)}{n}}}\leq \frac{\mathcal{B}_j(K_1,\ldots,K_k)}{(|K_1|\cdots|K_k|)^{\frac{n+j}{n}}}.
\end{equation}
\end{lemma}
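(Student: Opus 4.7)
The proof plan is to reduce the multi-body statement to a scale-invariant single-body moment inequality via AM-GM across coordinate directions, and then invoke a Ball/Lutwak-type sharp bound. Since $\mathcal{B}_j$ is $(n+j)$-homogeneous in each $K_i$ separately, matching the homogeneity of $|K_i|^{(n+j)/n}$, both sides of the claim are invariant under independent rescalings $K_i\mapsto t_iK_i$. So I may normalize to $|K_i|=|B_j^n|$ for every $i$. A direct polar integration with respect to the $l_j$-norm gives $M_j(B_j^n,e_1):=\int_{B_j^n}|x(1)|^j\,dx=|B_j^n|/(n+j)$, so that the inequality to be established reduces to $\mathcal{B}_j(K_1,\ldots,K_k)\geq n\,M_j(B_j^n,e_1)^k$.

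Next, for any fixed orthonormal basis $\{\epsilon_m\}$, AM-GM applied to the $n$ nonnegative terms indexed by $m$ in the definition of $\mathcal{B}_j(K_1,\ldots,K_k,\{\epsilon_m\})$ yields
$$\sum_{m=1}^n\prod_{i=1}^k M_j(K_i,\epsilon_m)\;\geq\; n\prod_{i=1}^k\Bigl(\prod_{m=1}^n M_j(K_i,\epsilon_m)\Bigr)^{1/n},$$
where I write $M_j(K,u):=\int_K|\langle x,u\rangle|^j\,dx$. This decouples the dependence on the individual bodies, and reduces matters to a uniform (in the basis) lower bound on $\prod_m M_j(K_i,\epsilon_m)$ for each $i$ separately.

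The key technical input is thus the single-body inequality
$$\prod_{m=1}^n M_j(K,\epsilon_m)\;\geq\;\frac{|K|^{n+j}}{(n+j)^n\,|B_j^n|^{\,j}},$$
valid for every convex body $K$ in $\R^n$ and every orthonormal basis $\{\epsilon_m\}$, with equality when $K=B_j^n$ and $\{\epsilon_m\}=\{e_m\}$. In the case $j=2$ this follows, via Hadamard's determinant inequality applied to the positive semidefinite covariance matrix of $K$, from Ball's theorem that the Euclidean ball minimizes the isotropic constant $L_K$. For general $j\geq 2$ it is the $L^j$-analogue due to Lutwak, essentially a consequence of the $L^p$-Busemann--Petty centroid inequality, which supplies the correct sharp constant. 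Substituting this bound into the AM-GM estimate and then taking the minimum over orthonormal bases produces $\mathcal{B}_j(K_1,\ldots,K_k)\geq n\prod_i M_j(B_j^n,e_1)\bigl(|K_i|/|B_j^n|\bigr)^{(n+j)/n}$, which by \eqref{eq-er-1} rearranges to the claim of the lemma.

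The main obstacle is this single-body inequality: the AM-GM reduction and the normalization are routine, but the sharp lower bound on the product of coordinate $L^j$-moments is the deep Ball/Lutwak ingredient on which the whole argument hinges.
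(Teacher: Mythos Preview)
Your overall architecture---normalize volumes, apply AM--GM over the $n$ coordinate directions to decouple the bodies, and reduce to a sharp single-body inequality on $\prod_{m=1}^n M_j(K,\epsilon_m)$---is exactly the route the paper takes. The difference lies entirely in how the single-body step is handled.

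The paper does \emph{not} invoke any deep external result here; the single-body bound is proved by a short, self-contained, elementary argument. Namely: (a) the product $\prod_m\int_Q|\langle x,e_m\rangle|^j\,dx$ is invariant under positive diagonal $SL(n)$ maps; (b) one can pick such a map $T_0$ equalizing all $n$ coordinate $j$-moments, so that the geometric mean equals $\tfrac{1}{n}\int_{T_0Q}\|x\|_j^j\,dx$; (c) a bathtub/layer-cake comparison, using only $|(T_0Q)\cap t^{1/j}B_j^n|\leq \min(|Q|,t^{n/j}|B_j^n|)$, shows that $\int_{T_0Q}\|x\|_j^j\,dx$ is minimized (among sets of the given volume) by the dilate of $B_j^n$. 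This yields your displayed inequality with the correct constant, for every $j$, with no appeal to centroid-body or affine isoperimetric machinery.

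By contrast, your proposal outsources this step. For $j=2$ your ``Hadamard plus $L_K$ minimal at the ball'' works, but note that the latter is itself just the bathtub argument above specialized to $\|\cdot\|_2$, so calling it a deep theorem of Ball overstates what is needed. For general $j$ your appeal to ``the $L^j$-analogue due to Lutwak, essentially a consequence of the $L_p$-Busemann--Petty centroid inequality'' is a genuine gap: that inequality bounds the volume of the $L_p$-centroid body (an integral over $S^{n-1}$ of moments), not the product of $n$ directional $j$-moments along an orthonormal basis, and you have not explained how to extract the product bound from it. So as written, the key step is unsubstantiated for $j\neq 2$; the fix is simply to replace the citation by the elementary diagonal-$SL(n)$/bathtub argument above.
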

\begin{proof}
We may assume that $$\mathcal{B}_j(K_1,\ldots,K_k,\{e_m\})=\mathcal{B}_j(K_1,\ldots,K_k).$$
Let $Q$ be a convex body in $\R^n$. 
We will need the following simple fact.

\textit{Fact.} Let $T\in SL(n)$ be a diagonal positive definite map (with respect to the basis $\{e_m\}$). Then,
$$\prod_{m=1}^n\int_{TQ}|\langle x,e_m\rangle|^j\, dx=\prod_{m=1}^n\int_Q|\langle x,e_m\rangle|^j\, dx.$$
Furthermore, there exists a diagonal positive definite map $T_0\in SL(n)$, such that 
$$\int_{T_0Q}|\langle x,e_1\rangle|^j\, dx=\ldots=\int_{T_0Q}|\langle x,e_n\rangle|^j\, dx.$$

It follows that
\begin{eqnarray*}
\left(\prod_{m=1}^n\int_Q|\langle x,e_m\rangle|^j\, dx\right)^{1/n}&=&\left(\prod_{m=1}^n\int_{T_0Q}|\langle x,e_m\rangle|^j\, dx\right)^{1/n}\\
&=&\frac{1}{n}\sum_{m=1}^n\int_{T_0Q}|\langle x,e_m\rangle|^j\, dx\\
&=&\frac{1}{n}\int_{T_0Q}\|x\|_j^j\, dx\\
&=&\frac{1}{n}\int_0^\infty|(T_0Q)\cap \{x:\|x\|_j^j\geq t\}|\, dt\\
&=&\frac{1}{n}\int_0^\infty\left(|T_0Q|-|(T_0Q)\cap\{x:\|x\|_j<t^{1/j}\}|\right)\, dt\\
&=&\frac{1}{n}\int_0^\infty\left(|Q|-|(T_0Q)\cap (t^{1/j}B_j^n)|\right)\, dt.
\end{eqnarray*}
Since, for all $t>0$, it holds
$$|(T_0Q)\cap (t^{1/j}B_j^n)|\leq \Big|\left(\left(|T_0Q|/|B_j^n|\right)^{1/n}B_j^n\right)\cap \left(t^{1/j}B_j^n\right)\Big|=\Big|\left(\left(|Q|/|B_j^n|\right)^{1/n}B_j^n\right)\cap \left(t^{1/j}B_j^n\right)\Big|,$$
we arrive at
\begin{eqnarray}\label{eq-ineq-final-lemma}
\left(\prod_{m=1}^n\int_Q|\langle x,e_m\rangle|^j\, dx\right)^{1/n}&\geq&\frac{1}{n}\int_{\left(|Q|/|B_j^n|\right)^{1/n}B_j^n}\|x\|_j^j\, dx\nonumber\\
&=&\frac{1}{n}\left(\frac{|Q|}{|B_j^n|}\right)^{\frac{n+j}{n}}\int_{B_j^n}\|x\|_j^j\, dx=:c_{n,j}|Q|^{\frac{n+j}{n}},
\end{eqnarray}
where $c_{n,j}$ is a positive constant that depends only on $n$ and $j$, such that equality holds in \eqref{eq-ineq-final-lemma} if $Q=B_j^n$.

By the arithmetic-geometric mean inequality and \eqref{eq-ineq-final-lemma}, one has
\begin{eqnarray*}
\mathcal{B}_j(K_1,\ldots,K_k,\{e_m\})&\geq& n\prod_{m=1}^n\left(\prod_{i=1}^k\int_{K_i}|\langle x_i,e_m\rangle|^j\, dx\right)^{1/n}\\
&=&n\prod_{i=1}^k\left(\prod_{m=1}^n\int_{K_i}|\langle x_i,e_m\rangle|^j\, dx\right)^{1/n}\geq n(c_{n,j})^k\prod_{i=1}^k|K_i|^{\frac{n+j}{n}}.
\end{eqnarray*}
Notice that if $K_1=\ldots=K_k=B^n_j$, then equality holds in all previous inequalities. This finishes the proof of the Lemma.
\end{proof}
Finally, we would like to extend the definition of the ${\cal B}_j$ functional,  to tuples of functions instead of tuples of convex bodies. For even non-negative integrable functions $f_1,\ldots,f_k$, $2\leq j\leq k$ and $\{\epsilon_m\}\in\mathcal{D}(n)$, set
\begin{equation*}
\mathcal{B}_j(f_1,\ldots,f_k,\{\epsilon_m\}):=\sum_{m=1}^n\prod_{i=1}^k\int_{\R^n}|\langle x_i,\epsilon_m\rangle|^j f_i(x_i)\,dx_i
\end{equation*}
and
\begin{equation*}
\mathcal{B}_j(f_1,\ldots,f_k):=\min_{\{\epsilon_m\}\in\mathcal{D}(n)}\mathcal{B}_j(f_1,\ldots,f_k,\{\epsilon_m\}).
\end{equation*}
The functional version of the conjectured inequality \eqref{multiBALLineq111} states the following. Let $f_i:\R^n\to\R_+$, $i=1,\ldots,k$, be even functions satisfying ${\cal S}_j$-polarity condition with respect to some non-negative and decreasing function $\rho$. Then,
\begin{equation}\label{multiBALLineq1}
\mathcal{B}_j(f_1,\ldots,f_k)\leq n\left(\int_{\R^n}|\langle u,e_1\rangle|^j\rho\left({{k}\choose{j}}\|u\|_j^j\right)^{\frac{1}{k}}\,du\right)^k=n^{1-k}\left(\int_{\R^n}\|u\|_j^j\rho\left({{k}\choose{j}}\|u\|_j^j\right)^{\frac{1}{k}}\,du\right)^k.
\end{equation}
By \eqref{observation}, \eqref {multiBALLineq1} would immediately imply \eqref{multiBALLineq111}.
Using \eqref{observation}, Proposition \ref{jfvnbdf},  and Proposition \ref{vjndsf} we obtain the following.
\begin{corollary}
If \eqref{multiBALLineq1} holds for any even non-negative integrable functions $f_1,\ldots,f_k$ and any non-negative decreasing function $\rho$, then the functional $j$-Santal\'{o} conjecture \ref{jvnljfnv} holds in full generality.
\end{corollary}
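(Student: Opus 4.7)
The plan is to chain together three ingredients already established in the excerpt: the indicator-function reduction \eqref{observation}, Proposition \ref{jfvnbdf}, and the equivalence of set and functional versions of the $j$-Santal\'o conjecture contained in Proposition \ref{vjndsf}.

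First, I would deduce the set-level Ball-type inequality \eqref{multiBALLineq111} from the assumed \eqref{multiBALLineq1}. Fix symmetric convex bodies $K_1,\ldots,K_k$ satisfying $\mathcal{E}_j$-polarity condition, put $f_i:=1_{K_i}$, and let $\rho$ be the function from \eqref{observation}. Then the $f_i$ are even, non-negative, integrable, and satisfy $\mathcal{S}_j$-polarity with respect to the decreasing function $\rho$, so the hypothesis applies. For every orthonormal basis $\{\epsilon_m\}$ one has $\int_{\R^n}|\langle x_i,\epsilon_m\rangle|^j f_i(x_i)\,dx_i=\int_{K_i}|\langle x_i,\epsilon_m\rangle|^j\,dx_i$, and since $\rho({{k}\choose{j}}\|u\|_j^j)^{1/k}=1_{B_j^n}(u)$, the right-hand side of \eqref{multiBALLineq1} reduces to $n\left(\int_{B_j^n}|\langle u,e_1\rangle|^j\,du\right)^k$, which by \eqref{eq-er-1} equals $\mathcal{B}_j(B_j^n,\ldots,B_j^n)$. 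This is precisely \eqref{multiBALLineq111}.

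Second, I would invoke Proposition \ref{jfvnbdf}, which promotes \eqref{multiBALLineq111} to the set-level inequality \eqref{santalo-for-many}. Since the $K_i$ were an arbitrary $\mathcal{E}_j$-polar tuple of symmetric convex bodies, this amounts to Conjecture \ref{S_j Santalo for bodies} in full generality. Finally, I would apply Proposition \ref{vjndsf} with $\mu$ the Lebesgue measure (which is $n$-homogeneous) and with all groups $G_i$ trivial, so that $\mathcal{S}(G_i)$ and $\mathcal{F}(G_i)$ contain all sets, respectively all functions; the equivalence then upgrades Conjecture \ref{S_j Santalo for bodies} to Conjecture \ref{jvnljfnv}, which is the desired statement.

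Because every step is a direct appeal to a previously established result, there is no substantive obstacle in this argument. The only point needing attention is the constant-matching in the first step, which is immediate from the explicit form of $\rho$ in \eqref{observation} combined with identity \eqref{eq-er-1}.
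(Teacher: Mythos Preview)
Your proposal is correct and follows precisely the route indicated in the paper, which simply cites \eqref{observation}, Proposition \ref{jfvnbdf}, and Proposition \ref{vjndsf} as the three ingredients. Your write-up merely spells out the details of each of these steps more carefully than the paper does.
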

Furthermore, notice that Proposition \ref{Sjp-BALL-orthants} and \eqref{observation} imply the following.
\begin{corollary}
Inequalities \eqref{multiBALLineq111} and \eqref{multiBALLineq1} are both true in the unconditional case.
\end{corollary}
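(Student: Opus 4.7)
The plan is to derive both inequalities directly from Proposition \ref{Sjp-BALL-orthants}, specialized to $p=1$ and $q=j$, by reassembling the functional $\mathcal{B}_j$ through summation of the one-coordinate bound over the standard basis.

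I would begin with the functional inequality \eqref{multiBALLineq1}. Given unconditional even $f_1,\ldots,f_k$ satisfying $\mathcal{S}_j$-polarity with respect to $\rho$, the restrictions $f_i|_{\R^n_+}$ satisfy $S_{j,1}$-polarity with respect to $\rho$, since on $\R^n_+$ the identity $|r|=r$ makes $S_{j,1}$ and $\mathcal{S}_j$ agree. Proposition \ref{Sjp-BALL-orthants} then yields, for every $m\in\{1,\ldots,n\}$,
$$\prod_{i=1}^k\int_{\R^n_+}|\langle x_i,e_m\rangle|^j f_i(x_i)\,dx_i \le \left(\int_{\R^n_+}|\langle u,e_1\rangle|^j\rho\left({{k}\choose{j}}\|u\|_j^j\right)^{\frac{1}{k}}du\right)^k.$$
Multiplying by $2^{nk}$ and using unconditionality (so that each integral in sight over $\R^n$ is exactly $2^n$ times its counterpart over $\R^n_+$), the same bound holds with $\R^n$ in place of $\R^n_+$ on both sides. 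The right-hand side is invariant under permutations of the coordinates of $u$, hence is independent of $m$; summing over $m=1,\ldots,n$ therefore assembles $\mathcal{B}_j(f_1,\ldots,f_k,\{e_m\})$ on the left and produces the factor $n$ on the right. Since $\mathcal{B}_j(f_1,\ldots,f_k)\le \mathcal{B}_j(f_1,\ldots,f_k,\{e_m\})$ by definition of the minimum over orthonormal bases, \eqref{multiBALLineq1} follows.

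For the geometric version \eqref{multiBALLineq111}, I would invoke the reduction \eqref{observation} to the functional setting: set $f_i=1_{K_i}$ together with the indicator-type $\rho$ prescribed there. Unconditionality of $K_i$ transfers to $f_i$, and a direct computation gives $\mathcal{B}_j(1_{K_1},\ldots,1_{K_k},\{\epsilon_m\})=\mathcal{B}_j(K_1,\ldots,K_k,\{\epsilon_m\})$. Since $\rho\left({k\choose j}\|u\|_j^j\right)^{1/k}=1_{B_j^n}(u)$, the right-hand side of \eqref{multiBALLineq1} becomes $n\left(\int_{B_j^n}|\langle u,e_1\rangle|^j\,du\right)^k$, which equals $\mathcal{B}_j(B_j^n,\ldots,B_j^n)$ by \eqref{eq-er-1}.

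I do not anticipate any substantive obstacle here. The only routine verifications are (a) the coordinate-permutation symmetry of the right-hand side of Proposition \ref{Sjp-BALL-orthants}, which makes the one-coordinate bound independent of $m$ and lets the sum over $m$ produce a clean factor $n$, and (b) the passage from $\R^n_+$ to $\R^n$ via the unconditional symmetry of all integrands involved, so that the factor $2^{nk}$ distributes correctly across both sides.
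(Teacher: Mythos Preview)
Your proposal is correct and follows exactly the route the paper indicates: the paper's entire argument is the sentence ``notice that Proposition~\ref{Sjp-BALL-orthants} and \eqref{observation} imply the following,'' and you have simply unpacked those two ingredients (Proposition~\ref{Sjp-BALL-orthants} with $p=1$, $q=j$, summed over $m$, together with the reduction \eqref{observation} for the set version) in the expected way.
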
  

We mention that the authors in \cite{kinezoi} proved the following functional version of Ball's inequality: If $\rho:\R\to\R_+$ is a measurable function and $f_1,f_2:\R^n\to\R_+$ are integrable unconditional log-concave functions satisfying $f_1(x_1)f_2(x_2)\leq \rho(\langle x_1,x_2\rangle)$, for all $x_1,x_2\in\R^n$, then
\begin{equation}\label{eq-kinezoi}
\int_{\R^n}\int_{\R^n}\langle x,y\rangle^2f_1(x)f_2(y)\, dx\, dy\leq   n^{-1}\left(\int_{\R^n}\|u\|_2^2\rho\left(\|u\|_2^2\right)^{\frac{1}{2}}\,du\right)^2
\end{equation}
It is unknown if \eqref{eq-kinezoi} holds for arbitrary even log-concave functions. Using similar arguments as in the  case of sets, one can show that the conjectured inequality \eqref{multiBALLineq1} (for arbitrary even integrable functions) for $k=j=2$ is equivalent to \eqref{eq-kinezoi}. Hence, \eqref{multiBALLineq1} for unconditional functions can be interpreted as an extension of the functional version of Ball's inequality to the multi-entry setting, if $\rho$ is additionally assumed to be decreasing.
\\
\\
\textbf{Acknowledgment.} In a previous version of this manuscript, we mistakenly claimed that Conjectures \ref{S_j Santalo for bodies} and \ref{jvnljfnv} also hold for $j=1$. We would like to thank Matthieu Fradelizi for pointing us the error and for providing the example in Remark \ref{Fradelizi}. 

\vspace{1.5 cm}
\noindent 
Pavlos Kalantzopoulos \\
Department of Mathematics\\
Central European University\\
Budapest, Hungary, 1051 \\
E-mail address: kalantzopoulos\_pavlo@phd.ceu.edu \ \& \ paul-kala@hotmail.com\\
\noindent\\
Christos Saroglou \\
Department of Mathematics\\
University of Ioannina\\
Ioannina, Greece, 45110 \\
E-mail address: csaroglou@uoi.gr \ \& \ christos.saroglou@gmail.com

\begin{thebibliography}{10}
\bibitem{AKM} S. Artstein-Avidan, B. Klartag, V. D. Milman, \emph{The Santal\'o point of a function
and a functional form of Santal\'{o} inequality},  Mathematika 51 (2005), pp 33-48.
\bibitem{AM} S. Artstein-Avidan, V. D. Milman, \emph{The concept of duality in asymptotic geometric analysis, and the characterization of the Legendre transform}, Ann. of Math. 169 (2009), pp 661-674.
\bibitem{zoo} S. Artstein-Avidan, S. Sadovsky, K. Wyczesany, \emph{A Zoo of Dualities},
2021 (preprint).  arXiv:2110.11308
\bibitem{Ball-f-Santalo} K. Ball, \emph{Isometric problems in $l_p$ and sections of convex sets}, PhD dissertation, Cambridge (1986).
\bibitem{Ball-conjecture} K. Ball,  \emph{Some remarks on the geometry of convex sets}. In: Lindenstrauss J., Milman V.D. (eds) Geometric Aspects of Functional Analysis. Lecture Notes in Mathematics, vol 1317 (1988) Springer, Berlin, Heidelberg. 
\bibitem{Ball-last} K. Ball, \emph{ Logarithmically concave functions and sections of convex sets in $\mathbb{R}^n$}, Studia Math. 88 (1) (1988)
, pp 69-84.
\bibitem{Bla} W. Blaschke,
\emph{Uber Affine Geometrie VII: Neue Extremeigenschaften von Ellipse und Ellipsoid},
Leipziger Ber. 69 (1917), pp 306-318 .
\bibitem{Borell} C. Borell, \emph{Convex set functions d-space},  Period. Math. Hung. 6 (2) (1975), pp 111-136.
\bibitem{BS} K. J. B\"{o}r\"{o}czky, R. Schneider, \emph{A characterization of the duality mapping for convex bodies},
Geom. Funct. Anal. 18 (2008), pp 657-667. 
\bibitem{BM} J. Bourgain, V. D. Milman, \emph{New volume ratio properties for convex symmetric
bodies in $\R^n$}, Invent. Math. 88 (1987), pp 319-340.
\bibitem{FM} M. Fradelizi, M. Meyer, \emph{Some functional forms of Blaschke-Santal\'o inequality}, Math. Z., Springer, 256 (2) (2007), pp 379-395 .
\bibitem{kinezoi} Q. Huang,  A. Li, \emph{The functional version of the Ball inequality}, Proc. Amer. Math. Soc. 145 (2017), pp 3531-3541.
\bibitem{IS}H. Iriyeh, M. Shibata, \emph{Symmetric Mahler's conjecture for the volume product in the 
3
-dimensional case.}, Duke Math. J. 169 (6) (2020), pp 1077-1134
\bibitem{KW} A. Kolesnikov,
E. Werner: \emph{Blaschke-Santal\'{o} inequality for many functions and geodesic
barycenters of measures }, Adv. Math. (2021), 108110.
\bibitem{Ku} G. Kuperberg, \emph{From the Mahler conjecture to Gauss linking integrals}, Geom. Funct.
Anal. 18 (3) (2008), pp 870-892.
\bibitem{Lehec} J. Lehec: \emph{A direct proof of the functional Santal\'{o} inequality
}, C. R. Math. Acad. Sci. Paris 347 (2009), pp 55-58.
\bibitem{le} L. Leindler, \emph{ 
On a certain converse of H\"older's inequality. II}, 
Acta Sci. Math. (Szeged) 33 (1972), pp 217-223. 
\bibitem{LZ} E. Lutwak, G. Zhang, \emph{Blascke-Santal\'o inequalities}, J. Differential Geom. 47 (1) (1997), pp 1-16.
\bibitem{Lu} E. Lutwak, \emph{Selected affine isoperimetric inequalities}, Handbook of Convex Geometry, North-Holland (1993), pp 151-176.
\bibitem{M.P.} M. Meyer, A. Pajor, \emph{On Santal\'{o}'s inequality}, In: Lindenstrauss J., Milman V.D. (eds) Geometric Aspects of Functional Analysis. Lecture Notes in Mathematics, vol 1376. (1989) Springer, Berlin, Heidelberg. 
\bibitem{MP2} M. Meyer, A. Pajor: \emph{On the Blaschke-Santal\'{o} inequality},  Archiv der Mathematik 55 (1990), pp 82-93.
\bibitem{Mi} V. D. Milman: \emph{Isomorphic symmetrization and geometric inequalities}. In: Lindenstrauss J., Milman V.D. (eds) Geometric Aspects of Functional Analysis. Lecture Notes in Mathematics, vol 1317. (1988) Springer, Berlin, Heidelberg.
\bibitem{Mi-Pa} V. D. Milman, A. Pajor, \emph{Isotropic position and inertia ellipsoids and zonoids of the unit ball of a normed $n$-dimensional space}, In: Lindenstrauss J., Milman V.D. (eds) Geometric Aspects of Functional Analysis. Lecture Notes in Mathematics, vol 1376. (1989) Springer, Berlin, Heidelberg. 
\bibitem{Naz} F. Nazarov, \emph{The H\"ormander proof of the Bourgain-Milman theorem}. In: Klartag B., Mendelson S., Milman V. (eds) Geometric Aspects of Functional Analysis. Lecture Notes in Mathematics, vol 2050. (2012) Springer, Berlin, Heidelberg.
\bibitem{Pr} A. Pr\'ekopa, \emph{Logarithmic concave measures with application to stochastic programming}, 
Acta Sci. Math. (Szeged) 32 (1971), pp 301-316.  
\bibitem{Santalo} L. A. Santal\'{o},
\emph{Un invariante afin para los cuerpos convexos del espacio de $n$ dimensiones},
Portugal. Math., 8 (1949), pp 155-161.
\bibitem{Sc} R. Schneider, \emph{Convex bodies: The Brunn-Minkowski theory.} Encyclopedia of Mathematics and its applications. Cambridge University Press, 2nd edition, 2013.
\bibitem{Uhrin} B. Uhrin, \emph{Curvilinear extensions of the Brunn-Minkowski-Lusternik inequality}, Adv. Math. 109 (2) (1994), pp 288-312.
\end{thebibliography}
\end{document}